\theoremstyle{plain}
\newtheorem{theorem}{\bf Theorem}[section]
\newtheorem*{theorem*}{Theorem}
\newtheorem{proposition}[theorem]{\bf Proposition}
\newtheorem{lemma}[theorem]{\bf Lemma}
\newtheorem{corollary}[theorem]{\bf Corollary}
\theoremstyle{definition}
\newtheorem{definition}[theorem]{\bf Def\mbox{}inition}
\newtheorem{remark}[theorem]{\bf Remark}
\theoremstyle{remark}
\newtheorem{example}[theorem]{\bf Example}
\theoremstyle{example}
\title{Veronese Algebras and Modules of Rings with Straightening Laws}
\author{Alexandru Constantinescu}
\address{Dipartimento di Matematica, Universit\`a di Genova,
Via Dodecaneso 35, 16146 Genova, Italy}
\email{constant@dima.unige.it}
\date{}
\def\k#1{\ensuremath{K[x_{1} , \ldots , x_{#1}]}}
\def\HF{Hilbert function}
\def\GB{Gr$\ddot{\textrm{o}}$bner basis}
\def\ini{\operatorname{\rm in}}
\def\R#1{\ensuremath{R^{(#1)}}}
\def\M#1#2{\ensuremath{M_{#1}^{(#2)}}}
\def\A#1{\ensuremath{A^{(#1)}}}
\def\P#1{\ensuremath{P^{(#1)}}}
\def\rank{\textup{rank}}
\def\supp{\textup{supp}}
\def\hght{\textup{ht}}
\def\tor{\mathop{\rm Tor}\nolimits}
\def\a{\ensuremath{\alpha}}
\def\b{\ensuremath{\beta}}
\def\c{\ensuremath{\gamma}}
\def\d{\ensuremath{\delta}}
\def\m{\ensuremath{\mu}}
\def\vv{\texttt{v}}
\begin{document}
\maketitle
\begin{abstract}
 Do the Veronese rings  of an algebra with straightening laws (ASL) still have an  ASL structure? 
We give positive answers to this question in  
some particular cases, namely for the second Veronese algebra of Hibi rings and  of discrete ASLs. We also prove that the Veronese modules of the polynomial ring have a structure of module with straightening laws. In dimension at most three we 
 present a poset construction  that has the required combinatorial properties to support such a structure.
 \end{abstract}
\section*{Introduction}

The notion of algebra with straightening laws (ASL for short) was introduced in the early eighties by  De Concini, Eisenbud and  Procesi in \cite{DEP}. These algebras give an unif\mbox{}ied treatment of both algebraic and geometric objects that have a combinatorial nature. The coordinate rings of some classical algebraic varieties, such as determinantal rings (in particular the coordinate ring of Grassmannians) and Pfaf\mbox{}f\mbox{}ian rings, are  examples of ASL's. In \cite{bruns},  Bruns generalizes this notion in a natural way, by  introducing the concept of module with straightening laws (MSL) over an ASL. For a general overview on this subject the reader may consult \cite{DEP}, the books of Bruns and Vetter \cite{bv} and of Bruns and Herzog \cite{bh}.

One interesting question regarding ASLs is whether their Veronese algebras still have a structure of algebra with straightening laws. So far, the only positive answer to this question was given in \cite{Conca1}  by Conca in the case of the polynomial ring. The ASL structure described in \cite{Conca1} indicates that this question cannot have a simple answer.  The main idea behind the ASL structure is to  give a partial order on a set of K-algebra generators in such a way that the relations among these generators are \lq\lq compatible\rq\rq~with the partial  order. As we will see in this paper, one of the main obstacles to overcome in the search for an answer to the above question is of combinatorial nature. In particular, one needs to construct a new poset that should support the ASL structure of the Veronese algebra. 

In the f\mbox{}irst section of this paper we will introduce   the terminology and notation that we will use later on. We will also present a few known results that will turn out useful. 
%
%

In Section 2 we will extend Conca's result, namely we will prove that the Veronese modules of the polynomial ring have a structure of MSL. Here, the ASL structure of the polynomial ring given in \cite{Conca1} plays an important role. As a corollary we will obtain the result of  Aramova,  B\u arc\u anescu and Herzog (see \cite{abh}) which states  that the Veronese modules have a linear resolution.  Using the results of Bruns from \cite{bruns2} on MSL's, we will be able to  give an upper bound for the rate of a f\mbox{}initely generated MSL. 

In the third section we will study  the Veronese algebra of a homogeneous ASL. The f\mbox{}irst step towards proving that it is again an ASL is to construct a new poset. Using the translation of algebraic properties into combinatorial ones, we can  sketch the prof\mbox{}ile of the poset that needs  to be constructed. Unfortunately, we were not able to f\mbox{}ind a construction that works in general. However, we will prove that the second Veronese algebra of a discrete ASL and of a Hibi ring is again an ASL. The poset that we construct is the second zig-zag poset.

In the last section of this paper we will construct  a new  poset starting from a poset of rank three.  Then we will prove that it has the combinatorial properties to support an ASL structure of the Veronese algebra. 

The author wishes to thank his advisor Aldo Conca for his encouragement and for his helpful remarks on preliminary versions of this paper. We also  thank  J\"urgen Herzog for suggesting the study of the Veronese modules.

\section{Preliminaries} 

 Let us summarize the basic def\mbox{}initions and terminology that we will use. 
 Throughout this paper we will consider only  f\mbox{}inite partially ordered sets (posets). Let $P$ be a poset and let  $C : \a_1 < \ldots < \a_t$ be a chain in $P$ (i.e. a totally ordered subset of $P$). With this notation we say that $C$ is a chain \emph{descending} from $\a_t$.  The \emph{length} of $C$ will be the cardinality of the set $C$.
 The \emph{rank} of a poset $P$, denoted by $rank(P)$, is the supremum of the lengths of all chains contained in $P$. A poset is called \emph{pure} if all maximal chains have the same length. 
 The \emph{height} of an element $\a \in P$, denoted $\hght(\a)$ is:  
 \[\hght(\a) = \sup\{ \textup{length of chains descending from } \a\} - 1.\]
 Given a natural number $m \ge 1$, an \emph{$m$-multichain} in $P$ is a weakly  increasing sequence of $m$ elements of $P$: $\a_1 \le \ldots \le \a_m$.
 A \emph{poset ideal} of $P$ is a subset $I$ such that if $\a \in I, \b \in P$ and $\b \le \a$ then $\b \in I$.
 
 Let $K$ be f\mbox{}ield, $A$ be a ring and $P \subset A$ be a poset.
 We call a  \emph{monomial} a product of the form $\a_1\a_2 \ldots \a_t$ where $\a_i \in P,~~ \forall~ i$. A monomial  $\a_1\a_2 \ldots \a_t$ is called \emph{standard} if $\a_1 \le \a_2 \le \ldots \le \a_t$.
 We will use the def\mbox{}inition of an ASL which is also used by Bruns in  \cite{bruns}. This def\mbox{}inition is given for graded $K$-algebras, but one can def\mbox{}ine an ASL also in the non-graded case (see \cite{DEP,hibi}).

 \begin{definition}
 Let  $A$ be a $K$-algebra and $P \subset A$ a f\mbox{}inite poset. We say that $A$ is  a (\emph{graded})\emph{ algebra with straightening laws} on $P$ over $K$ if the following conditions are satisf\mbox{}ied:
\begin{itemize}
\item[(ASL 0)] $A = \bigoplus_{i\ge0} A_i$ is a graded $K$-algebra such that $A_0 = K$, the elements of $P$ are homogeneous  of positive degree and they generate $A$ as a $K$-algebra.
\item[(ASL 1)] The set of standard monomials is a basis of $A$ as  a $K$-vector space.
\item[(ASL 2)] (Straightening Laws) If $\a, \b \in P$ are incomparable  elements (written $\a \not\sim \b$) and if 
\begin{equation}\label{SL}
\a\b = \sum_i r_i \c_{i1} \c_{i2} \ldots \c_{it_i},
\end{equation}
 with  $0 \ne r_i \in K$ and $\c_{i1} \le \c_{i2} \le \ldots\le \c_{it_i}$, is the unique linear combination of standard monomials given by (ASL 1), then $\c_{i1} < \a$ and $\c_{i1} < \b$ for every $i$.
 \end{itemize}
 When $P \subset A_1$ we say that $A$ is a \emph{homogeneous} ASL over $P$.
 \end{definition}

Note that in (\ref{SL}) the right hand side can be equal to $0$, but that, even though $1$ is a standard monomial, no $\c_{i1} \c_{i2} \ldots \c_{it_i}$ can be $1$. These relations are called the \emph{straightening laws} (or straightening relations) of $A$.

An ASL $A$ on $P$,  can be presented as $K[P]/I$, where $K[P]$ is the polynomial ring whose variables are the elements of $P$ and $I$ is the homogeneous ideal generated by the straightening laws. Denote by $I_P$ the monomial ideal of $K[P]$ generated by $\a\b$ with $\a, \b\in P$ and $\a\not\sim\b$. A linear extension of $(P,<)$ is a total order $<_1$ on $P$ such that $\a < \b$ implies $\a <_1 \b$ for any $\a, \b \in P$. When $A$ is a homogeneous ASL on $P$ and $\tau$ is the reverse lexicographic  term ordering with respect to a linear extension of $<$, the polynomials given in (ASL 2) form a \GB~ of $I$ and  the initial ideal of $I$ with respect to $\tau$ is $\ini_{\tau}(I) = I_P$. 
The algebra $K[P]/I_P$ is an ASL on $P$ and it is called the \emph{discrete} ASL.

The discrete ASL over a poset $P$ can be also seen  as the Stanley-Reisner ring of the simplicial complex $\Delta_P$, where $\Delta_P$ is the  complex whose vertices are the elements of $P$ and whose facets are the maximal chains of $P$. This is a useful remark, as it allows one to compute the \HF~ of any ASL on $P$ by looking at the $f$-vector of $\Delta_P$.

The following proposition is  easy to check, but nevertheless very useful:

\begin{proposition}\label{subposet}
Let $A$ be an ASL on $P$ over $K$, and $H \subset P$ a poset ideal of $P$. Then the ideal $AH$ is generated as a $K$-vector space by the standard monomials containing a factor $\a \in H$, and $A/AH$ is an ASL on $P \setminus H$ (where $P \setminus H$ is embedded in $A/AH$ in a natural way). 
\end{proposition}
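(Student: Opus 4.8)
The plan is to work directly with the standard monomial basis provided by (ASL 1). Let $V \subseteq A$ be the $K$-subspace spanned by those standard monomials that contain at least one factor lying in $H$. The whole argument rests on a single preservation lemma: if $\m$ is any (not necessarily standard) monomial in the elements of $P$ having a factor in $H$, then rewriting $\m$ into standard form via the straightening relations (ASL 2) produces a $K$-linear combination of standard monomials each of which again has a factor in $H$. To see this I would track one straightening step applied to an incomparable pair $\a \not\sim \b$ occurring in $\m$. If the distinguished factor in $H$ is neither $\a$ nor $\b$, it survives untouched; if it is one of them, say $\a \in H$, then the smallest factor $\c_{i1}$ of every replacement monomial satisfies $\c_{i1} < \a$ by (ASL 2), and since $H$ is a poset ideal (downward closed) we get $\c_{i1} \in H$. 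Either way each monomial produced still meets $H$, and iterating --- the process terminates because the relations in (ASL 2) are a \GB~of $I$ with respect to a reverse lexicographic order, as recalled above --- yields the claim.

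For the f\mbox{}irst assertion I would use this lemma to show that $V$ is an ideal of $A$. Since $A$ is generated by $P$, it suf\mbox{}f\mbox{}ices to check $\d \cdot v \in V$ for $\d \in P$ and $v$ a standard monomial meeting $H$; but $\d v$ is then a monomial with a factor in $H$, so by the lemma its standard form lies in $V$. Thus $V$ is an ideal, and it contains $H$ because each $h \in H$ is itself a length-one standard monomial meeting $H$; hence $AH \subseteq V$. The reverse inclusion is immediate, as any standard monomial with a factor $\a \in H$ factors through $\a$ and so lies in $AH$. Therefore $AH = V$, which is exactly the f\mbox{}irst statement.

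For the second assertion, write $B = A/AH$ and split the standard monomial basis of $A$ into those meeting $H$ (a basis of $V = AH$) and those with all factors in $P \setminus H$; denote by $W$ the span of the latter. Then $A = V \oplus W$ as $K$-vector spaces, so $B \cong W$ and the images of the standard monomials on $P \setminus H$ form a $K$-basis of $B$ --- note that, $H$ being a poset ideal, $P \setminus H$ is a f\mbox{}ilter and a monomial is standard on $P \setminus H$ precisely when it is a standard monomial of $A$ avoiding $H$, which gives (ASL 1) for $B$. Axiom (ASL 0) holds since $AH$ is homogeneous of positive degree, so $B$ is graded with $B_0 = K$ and is generated by the images of $P \setminus H$. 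For (ASL 2), given incomparable $\a, \b \in P \setminus H$ I would take the straightening relation of $\a\b$ in $A$ and pass to $B$: every term containing a factor in $H$ dies, while each surviving term has all factors in $P \setminus H$ and smallest factor $\c_{i1}$ with $\c_{i1} < \a$ and $\c_{i1} < \b$, a condition inherited verbatim because the order on $P \setminus H$ is the restriction of the order on $P$. By the basis property this is the unique standard expansion in $B$, so (ASL 2) is verif\mbox{}ied.

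The only genuinely delicate point is the preservation lemma, and within it the fact that the rewriting terminates and converges to the unique standard form; once one grants the \GB~property stated above, this reduces to the interplay between the inequality $\c_{i1} < \a$ in (ASL 2) and the downward-closedness of $H$. Everything else is bookkeeping on the standard monomial basis.
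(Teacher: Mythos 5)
Your proof is correct. The paper in fact gives no argument for this proposition at all (it is dismissed as ``easy to check''), so there is nothing to compare against; your two-step argument --- the preservation lemma showing that straightening a monomial with a factor in $H$ only produces standard monomials meeting $H$, followed by the decomposition $A = AH \oplus W$ and the term-by-term verification of (ASL 0)--(ASL 2) for $A/AH$ --- is exactly the classical proof implicit in the paper's references (De Concini--Eisenbud--Procesi and Bruns--Vetter). One small caveat: the Gr\"obner-basis statement you invoke for termination of the rewriting is recalled in the paper only for \emph{homogeneous} ASLs, whereas the proposition concerns general graded ones; this is harmless, since with the weighted grading (each variable given the degree of the corresponding element of $P$) every straightening step replaces a monomial by strictly revlex-smaller monomials of the same degree, of which there are only finitely many, so the rewriting terminates in the graded case by the same reasoning.
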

\noindent This proposition allows one to prove results on ASLs using induction on the cardinality of $P$. Also the ASL structure in many examples is established this way.\\

The notion of ASL has a natural generalization  to modules in the following sense. For a module $M$ over an ASL $A$ we want the generators of $M$ to be partially ordered, a distinguished set of "standard elements" should form a $K$-basis of $M$ and the multiplication $A \times M \longrightarrow M$ should satisfy a straightening law similar to the straightening law of $A$. We have the following def\mbox{}inition due to Bruns:

\begin{definition} Let $A$ be an ASL on $P$ over a field $K$. An $A$-module $M$ is called \emph{module with straightening laws} on a f\mbox{}inite poset $Q \subset M$ if the following conditions are satisf\mbox{}ied:
\begin{itemize}
\item[(MSL 1)] For every $x \in Q$ there exists a poset ideal $\mathcal{I}(x) \subset P$ such that the elements
\begin{displaymath}
\a_1\a_2\ldots\a_i x, \quad with~  \a_1 \notin \mathcal{I}(x),\quad \a_1\le\a_2\le\ldots\le\a_i ~\textup{and}~ i\ge 0,
\end{displaymath}
form a basis of $M$ as a $K$-vector space. These elements are called \emph{standard elements}.
\item[(MSL 2)] For every $x \in Q$ and $\a \in \mathcal{I}(x)$  one has 
\begin{equation}
\a x \in \sum_{y < x} Ay.
\end{equation}
\end{itemize}
\end{definition}
\noindent An MSL on a poset $Q$ over a homogeneous ASL, say $A$, is called \emph{homogeneous} if it is a graded $A$-module in which $Q$ consists of elements of degree 0.
From (MSL 1) and (MSL 2) it follows immediately by induction on the rank of $x$ that each element $\a x$ with $\a \in \mathcal{I}(x)$ has a standard representation 
\[ \a x = \sum_{y<x}\Big(\sum r_{\a x \m y} \m \Big) y, \qquad \textup{with}~ 0 \ne r_{\a x \m y} \in K,  \]
in which every $\m y$ is a standard element.

\begin{remark} 
a) If $M$ is a MSL on a poset $Q$ and  $Q' \subset Q$ is a poset ideal, then the submodule of $M$ generated by $Q'$ is a MSL too. This allows one to prove theorems on MSLs by noetherian induction \mbox{ on the set of ideals of $Q$.}

b) In the def\mbox{}inition of MSL it would have been enough to require that the standard elements are linearly independent, because (MSL 2) and the induction principle above guarantee that $M$ is generated as a $K$-vector space by the standard elements.
\end{remark}
Given a graded $K$-algebra $A = \bigoplus_{i\ge0} A_i$ and $d \ge 2$ a positive integer, the \emph{$d$-Veronese algebra} of $A$ is by def\mbox{}inition $$A^{(d)} = \bigoplus_{i\ge0} A_{di}.$$ 
For every $d \ge 2$ one can consider for every $0 \le j \le d-1$ the \emph{$j$-thVeronese module} of $A$: $M_j^{(d)} = \bigoplus_{i\ge0} A_{di + j}.$ The module $M_j^{(d)}$ is obviously an $A^{(d)}$-module. \\

The polynomial ring in $n$ variables $R = \k{n}$ has an ASL structure by taking $x_1,\ldots,x_n$ as generators and the order: $x_1 \le \ldots \le x_n$.
 In \cite{Conca1},  the author  proves that the Veronese algebra of the polynomial ring  is still an ASL when the f\mbox{}ield $K$ is inf\mbox{}inite. 
The monomials in $n$ variables of degree $d$ are a natural choice for the generators of $R^{(d)}$. Unfortunately, already when $n=2$ and $d=3$, one cannot partially order the set $\{x_1^3, x_1^2x_2, x_1x_2^2, x_2^3\}$ in a compatible way with an ASL structure for $K[x_1,x_2]^{(3)}$. 

In order to f\mbox{}ind an  ASL  structure for $R^{(d)}$ one has to proceed as follows. 
For $i = 1,\ldots,n$ and $j = 1,\ldots, d$, take $\ell_{i,j}$ to be generic linear forms such that for any $j_1, \ldots, j_n \in \{1,\ldots,d\}$ the linear forms $\ell_{1,j_1} , \ldots ,\ell_{n,j_n}$ are linearly independent. The assumption on the cardinality of the field $K$ is needed for the existence of such forms. 
Take as generators of $R^{(d)}$ all products $\ell_{s_11}\ldots\ell_{s_dd}$ with the property  that $\sum_{i=1}^{i=d}s_i \le n-d+1$. 
Order these generators as follows: $$\ell_{s_11}\ldots\ell_{s_dd} \le \ell_{t_11}\ldots\ell_{t_dd} \iff s_i \le t_i \textup{ for every } i.$$
The abstract poset $H_n(d)$ corresponding to the partial order def\mbox{}ined on this set of generators is obtained as follows. Denote by $H(d) = \{1,\dots,n\}^d$ and order its elements component-wise, i.e.  $(\a_1,\ldots,\a_d) \le (\b_1,\ldots,\b_d)  \iff  \a_i \le \b_i, ~\forall~i$. 
Denote by $H_n(d)$   the subposet of $H(d)$ of elements of rank $\le n$, that is 
 \[ H_n(d) = \{ (\a_1,\ldots,\a_d)\in H(d) ~:~ \sum_{i=1}^d \a_i \le n+d-1\}.\]
 For our goals we will not need a description of the straightening relations on these generators.   We will only use the fact that $R^{(d)}$ has an  ASL structure  on $H_n(d)$.
For more details see Conca's paper \cite{Conca1}. 
Here is a graphical representation (Hasse diagram) of the poset $H_n(d)$ for $d=2$ and $d =3$, when $n =3$:
\[\setlength{\unitlength}{1mm}
\begin{picture}(112,40)
{\scriptsize
\put(2,10){\circle*{1}}   \put(4,9){$1$}
\put(2,20){\circle*{1}}   \put(4,19){$2$} 
\put(2,30){\circle*{1}}   \put(4,30){$3$}
}
\put(0,2){$P$}

\put(2,10){\line(0,1){20}}
\put(30,10){\line(-1,2){10}}
\put(30,10){\line(1,2){10}}
\put(25,20){\line(1,2){5}}
\put(35,20){\line(-1,2){5}}
{\scriptsize
\put(30,10){\circle*{1}}  \put(32,9){$11$}
\put(25,20){\circle*{1}}  \put(37,19){$12$}
\put(35,20){\circle*{1}}  \put(27,19){$21$}
\put(20,30){\circle*{1}}  \put(22,30){$31$}
\put(30,30){\circle*{1}}  \put(32,30){$22$}
\put(40,30){\circle*{1}}  \put(42,30){$13$}
}
\put(20,2){$P^{(2)} = H_3(2)$}
{\scriptsize
\put(85,10){\circle*{1}}  \put(87,9){$111$}
\put(70,20){\circle*{1}}  \put(72,19){$112$}
\put(85,20){\circle*{1}}  \put(88,19){$121$}
\put(100,20){\circle*{1}}  \put(102,19){$211$}
\put(60,30){\circle*{1}}  \put(62,30){$113$}
\put(70,30){\circle*{1}}  \put(72,30){$122$}
\put(80,30){\circle*{1}}  \put(82,30){$212$}
\put(90,30){\circle*{1}}  \put(92,30){$131$}
\put(100,30){\circle*{1}}  \put(102,30){$221$}
\put(110,30){\circle*{1}}  \put(112,30){$311$}
}
\put(75,2){$P^{(3)} = H_3(3)$}

\put(70,20){\line(-1,1){10}}
\put(70,20){\line(0,1){10}}
\put(70,20){\line(1,1){10}}

\put(85,10){\line(-3,2){15}}
\put(85,10){\line(0,1){10}}
\put(85,10){\line(3,2){15}}

\put(85,20){\line(-3,2){15}}
\put(85,20){\line(1,2){5}}
\put(85,20){\line(3,2){15}}

\put(100,20){\line(-2,1){20}}
\put(100,20){\line(0,1){10}}
\put(100,20){\line(1,1){10}}
\end{picture}
\]

A useful remark is that the 2nd Veronese algebra of the polynomial ring $R$ has an ASL structure also with the usual  monomials as generators. In this case the field $K$ may have any  cardinality. Consider the following order on the set of variables: $x_1 < x_2 < \ldots < x_n$. We can  order the degree two monomials as follows:
\[
x_ix_j \le x_kx_l \iff x_i \le x_k ~~\textup{and}~~ x_j \ge x_l.
\]
The straightening laws will be given in the following way.
If $x_ix_j \not\sim x_kx_l$, then rearrange the indices $i, j, k, l$ in increasing order, say $i_1 \le j_1 \le k_1 \le l_1$ (i.e. $\{i,j,k,l\} = \{i_1,j_1,k_1,l_1\}$ as multisets) and def\mbox{}ine:
\[
(x_ix_j)(x_kx_l) = (x_{i_1}x_{l_1})(x_{j_1}x_{k_1}).
 \]
 It is clear that $(x_{i_1}x_{l_1})(x_{j_1}x_{k_1})$ is a standard monomial. Also one can easily check that these relations are exactly the relations of the second Veronese algebra. In this case the new poset is the second zig-zag poset $Z_2(P)$ (see Section 3 for def\mbox{}inition), but it is also isomorphic to $H_n(2)$.
 For example, if $n = 3$, the poset will look like this:\\
 \setlength{\unitlength}{1mm}
\begin{picture}(80,40)(-30,0)
\put(10,10){\line(-1,2){10}}
\put(10,10){\line(1,2){10}}
\put(5,20){\line(1,2){5}}
\put(15,20){\line(-1,2){5}}
\put(10,10){\circle*{1}}  \put(6.5,6){$x_1x_3$}
\put(5,20){\circle*{1}}  \put(-5,19){$x_1x_2$}
\put(15,20){\circle*{1}}  \put(17,19){$x_2x_3$}
\put(0,30){\circle*{1}}  \put(-1,32){$x_1^2$}
\put(10,30){\circle*{1}}  \put(9,32){$x_2^2$}
\put(20,30){\circle*{1}}  \put(19,32){$x_3^2$}
\put(40,20){with}
\put(60,30){$(x_1x_2)(x_2x_3) = (x_1x_3)(x_2^2)$}
\put(64,23){$(x_1x_2)(x_3^2) = (x_1x_3)(x_2x_3)$}
\put(64,16){$(x_2x_3)(x_1^2) = (x_1x_3)(x_1x_2)$}
\put(68,9){$(x_i^2)(x_j^2)  = (x_ix_j)^2$}
\end{picture}\\
where $i \neq j$ and $i,j \in \{1,2,3\}$.

\section{The MSL Structure of the  Veronese Modules}

In this section we will prove that the Veronese modules of the polynomial ring have a structure of MSLs as $R^{(d)}$-modules.  For this part only we will assume that the field $K$ is infinite. We will then see that the MSL structure implies that the Veronese modules have a linear resolution.  Finally  we will find an upper bound for the rate of a  f\mbox{}initely generated  MSL. The bound is given  in terms of the degrees of its generators and the degrees of the generators of the ASL. 

Let $d\ge2$ be a positive integer, $j \in \{0 \ldots d-1\}$ and assume that the filed $K$ is infinite. Consider the same generic linear forms that give the ASL structure of $R^{(d)}$, presented in the previous section. Choose  as generators of $M_j^{(d)}$ the products of the form:
 \[\ell_{i_11}\cdots\ell_{i_jj}, \quad \textrm{with~} i_1 + \ldots + i_j \le n+j-1.\] 
Order them component-wise, just as in the case of the Veronese algebra of $R$. So the poset supporting the MSL structure will be $H_n(j)$.
To simplify notation, we will denote the generators of $R^{(d)}$, respectively the generators of $M^{(d)}_j$, by: 
\[\begin{array}{rclll}
f_{\a_1 \ldots \a_d} &= &\ell_{\a_11}\ldots \ell_{\a_dd}~, &\quad \forall~ (\a_1,\ldots,\a_d) &\textrm{~with~} \sum_{i=1}^d\a_i \le n+d-1,\\  
\rule{0 pt}{3ex}
g_{i_1 \ldots i_j} &= &\ell_{i_11}\ldots \ell_{i_jj}~, &\quad \forall~ (i_1,\ldots,i_j) &\textrm{~with~} \sum_{k=1}^j i_k \le n+d-1.
\end{array}\] 
As  \R{d} is generated as a $K$-algebra by  the $f_{\a_1 \ldots \a_d}$-s for every $d$, we get that the $g_{i_1 \ldots i_j}$-s generate $M_j^{(d)}$ as a \R{d}-module.
To every such generator we  associate a poset ideal of $H_n(d)$ as follows:
\[
\mathcal{I}(g_{i_1 \ldots i_j}) = 
\{f_{\a_1 \ldots \a_d}~:~ (\a_1,\ldots,\a_j,\ldots,\a_d) \not\ge (i_1,\ldots, i_j,1,\ldots,1)\}. 
\]
 It is clear that $\mathcal{I}(g_{i_1 \ldots i_j})$ is  a poset ideal for any $g_{i_1 \ldots i_j}$. We will prove the following:

\begin{theorem}
Let $R = \k{n}$ be the polynomial ring in $n$ variables. For every $d\ge 2$ and for every $j \in \{0,\ldots,d-1\}$, the $j$-th Veronese module $M_j^{(d)}$ is a homogenous MSL on $H_n(j)$ over $R^{(d)}$ with the structure def\mbox{}ined above.
\end{theorem}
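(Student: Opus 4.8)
The plan is to verify (MSL 1) and (MSL 2) by hand, invoking the already established ASL structure of $\R{d}$ on $H_n(d)$ exactly once, at the decisive point where linear independence is needed. Generation of $\M{j}{d}$ by the $g$'s is already noted above, so I would start with the straightening law (MSL 2). Fix a generator $g_{i_1\ldots i_j}$ and an $f_{\beta_1\ldots\beta_d}\in\mathcal{I}(g_{i_1\ldots i_j})$; since every $\beta_k\ge 1$, the condition $(\beta_1,\ldots,\beta_d)\not\ge(i_1,\ldots,i_j,1,\ldots,1)$ forces $\beta_k<i_k$ for some $k\le j$. The point is that $f_\beta\,g_i$ is simply a product of linear forms, so it can be regrouped column by column: setting $m_k=\min(\beta_k,i_k)$ and $M_k=\max(\beta_k,i_k)$ for $k\le j$, one gets $f_\beta\,g_i=f_\gamma\,g_{m_1\ldots m_j}$ with $\gamma_k=M_k$ for $k\le j$ and $\gamma_k=\beta_k$ for $k>j$. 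Here $f_\gamma\in R_d=(\R{d})_1$ lies in $\R{d}$ (whether or not $\gamma$ meets the rank bound), while $g_{m_1\ldots m_j}$ is a legitimate generator because $\sum_{k=1}^{j} m_k\le\sum_{k=1}^{j} i_k\le n+j-1$, and $g_{m_1\ldots m_j}<g_{i_1\ldots i_j}$ in $H_n(j)$ since $m_k\le i_k$ throughout, strictly for the index found above. Hence $f_\beta\,g_i\in\R{d}\,g_{m_1\ldots m_j}\subseteq\sum_{g'<g_i}\R{d}\,g'$, which is (MSL 2).

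By Remark (b) it now suffices to prove that the standard elements are linearly independent: (MSL 2) together with noetherian induction on the poset ideals of $H_n(j)$ (Remark (a)) automatically yields that they span $\M{j}{d}$ over $K$. Indeed, writing an arbitrary element as a combination of terms $f_{\beta^{(1)}}\cdots f_{\beta^{(m)}}g_i$ with $\beta^{(1)}\le\cdots\le\beta^{(m)}$, each such term is either already standard or, applying (MSL 2) to its bottom factor $f_{\beta^{(1)}}g_i$, rewrites as a combination of terms involving strictly smaller generators $g'$; since the rank of the generator drops, the process terminates.

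The heart of the argument is linear independence, and here Conca's theorem does the work through a single multiplication trick. Let $t=\ell_{1,j+1}\cdots\ell_{1,d}$, a form of degree $d-j$; multiplication by $t$ is an injective $K$-linear map $\M{j}{d}\to\R{d}$ because $R$ is a domain. I claim it sends standard elements to standard monomials of $\R{d}$. Indeed $g_{i_1\ldots i_j}\,t=f_{(i_1,\ldots,i_j,1,\ldots,1)}$ with $(i_1,\ldots,i_j,1,\ldots,1)\in H_n(d)$, so
\[ f_{\beta^{(1)}}\cdots f_{\beta^{(m)}}\,g_i\cdot t \;=\; f_{(i,1,\ldots,1)}\,f_{\beta^{(1)}}\cdots f_{\beta^{(m)}}, \]
and the defining inequality of a standard element, $f_{\beta^{(1)}}\notin\mathcal{I}(g_i)$, is exactly $\beta^{(1)}\ge(i,1,\ldots,1)$, so the factors on the right form the multichain $(i,1,\ldots,1)\le\beta^{(1)}\le\cdots\le\beta^{(m)}$ in $H_n(d)$. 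Consequently any $K$-linear relation among the standard elements of $\M{j}{d}$, after multiplication by $t$, becomes a relation among pairwise distinct standard monomials of $\R{d}$ (distinct since $\cdot\,t$ is injective), and (ASL 1) for $\R{d}$ forces all coefficients to vanish. With spanning this establishes (MSL 1). Homogeneity is clear, since each $g_{i_1\ldots i_j}$ has degree $j$ in $R$ and therefore lies in $(\M{j}{d})_0$.

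The main obstacle, in my view, is conceptual rather than computational: identifying the correct conduit to Conca's result. Once the form $t$ and the map $\cdot\,t$ are isolated, linear independence reduces instantly to (ASL 1) for $\R{d}$, and both the column-wise regrouping giving (MSL 2) and the straightening induction giving spanning are routine.
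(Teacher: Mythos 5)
Your proposal is correct and follows essentially the same route as the paper's own proof: the same regrouping of linear forms column by column to verify (MSL 2), the same appeal to the remark that (MSL 2) plus induction reduces (MSL 1) to linear independence, and the same key trick of multiplying by $\ell_{1,j+1}\cdots\ell_{1,d}$ to turn standard elements of $M_j^{(d)}$ into standard monomials of $R^{(d)}$ and invoke (ASL 1). One small quibble: the distinctness of the resulting standard monomials comes from the fact that the minimal factor $f_{(i_1,\ldots,i_j,1,\ldots,1)}$ of the image multichain recovers $g_{i_1\ldots i_j}$ (and the remaining factors recover $\mu$), not from injectivity of multiplication by $t$ as you assert --- though the paper itself passes over this point in silence as well.
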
 
 
 \begin{proof}
 If $j=0$ then $M_0^{(d)} = R^{(d)}$ as $R^{(d)}$-modules. In this case, we have a trivial MSL structure over the poset $Q = \{1\}$, with $\mathcal{I}(1) = \phi$ (see \cite{bruns}). So we will suppose from now  on that $j \ge 1$.
 
 In order to prove that we have an MSL structure for \M{j}{d} we have to check the following:
\begin{itemize}
\item[1.]
$ \forall~ g_{i_1 \ldots i_j}$ and $\forall~ f_{\a_1 \ldots \a_d} \in \mathcal{I}(g_{i_1 \ldots i_j})$ we have:
 \[f_{\a_1 \ldots \a_d}\cdot g_{i_1 \ldots i_j} \in \sum_{g_{k_1 \ldots k_j} < g_{i_1 \ldots i_j}} R^{(d)}\cdot g_{k_1 \ldots k_j}.\]
\item[2.] The standard elements are linearly independent over $K$.
\end{itemize}

To prove 1. let us choose  $g_{i_1 \ldots i_j}$ for some $(i_1,\ldots,i_j) \in H_n(j)$ and some $f_{\a_1 \ldots \a_d} \in \mathcal{I}(g_{i_1 \ldots i_j})$. This means that $(\a_1,\ldots,\a_d) \not\ge (i_1,\ldots, i_j,1,\ldots,1)$. So there exists an index $s \in \{1,\ldots, j\}$ such that $\a_s < i_s.$ We have:
\[
\begin{array}{ccl}
f_{\a_1 \ldots \a_d} \cdot g_{i_1 \ldots i_j} &= &\ell_{\a_11}\ldots \ell_{\a_dd}\cdot( \ell_{i_11}\ldots \ell_{i_j j})\\
 &\rule{0pt}{3ex}{= }& \ell_{\a_11} \ldots \ell_{i_s s} \ldots \ell_{\a_d d} \cdot (\ell_{i_11} \ldots \ell_{\a_s s} \ldots \ell_{i_j j})\\
 &\rule{0pt}{3ex}{= }& \ell_{\a_11} \ldots \ell_{i_s s} \ldots \ell_{\a_d d} \cdot g_{i_1 \ldots,\a_s, \dots i_j}
\end{array}
\]
As  $\a_s < i_s$ we also have that $g_{i_1 \ldots\a_s \dots i_j} < g_{i_1 \ldots i_s \ldots i_j}$, so 1. holds true.

As all standard elements are homogeneous polynomials, in order to prove the second part, we have to look only at linear combinations of standard elements of the same degree. 
Let $F$ be a linear combination of standard elements of degree $md + j$:
\[ F = \sum \lambda \m g_{i_1 \ldots i_j},
\]
where not all $\lambda \in K$ are zero and every  $\mu = f_{\a_{11} \ldots \a_{1d}}\cdot\ldots\cdot  f_{\a_{m1} \ldots \a_{md}}$ is a standard monomial in \R{d} with  $f_{\a_{11} \ldots \a_{1d}} \notin \mathcal{I}(g_{i_1 \ldots i_j})$. In particular 
$(\a_{11}, \ldots, \a_{1 j},\ldots, \a_{1d}) \ge (i_1, \ldots ,i_j,1,\ldots,1)$ for all $g_{i_1 \ldots i_j}$. 
If $F = 0$ then also $F \cdot \ell_{1 j+1}\ldots\ell_{1d} = 0$. But for all $g_{i_1 \ldots i_j}$ we have 
\[
g_{i_1 \ldots i_j} \cdot \ell_{1 j+1}\ldots\ell_{1d} = f_{i_1 \ldots i_j 1 \ldots 1}. 
\]
As $f_{i_1 \ldots i_j 1 \ldots 1} \le f_{\a_{11} \ldots \a_{1d}} \le \ldots \le  f_{\a_{m1} \ldots \a_{md}}$ we have that 
\[
F \cdot \ell_{1 j+1}\ldots\ell_{1d} = \sum \lambda f_{i_1 \ldots i_j 1 \ldots 1}  f_{\a_{11} \ldots \a_{1d}} 
\ldots   f_{\a_{m1} \ldots \a_{md}} = 0
\]
is a  linear combination of standard monomials in \R{d}. So, as the standard monomials form a $K$-basis of $R^{(d)}$, all the coef\mbox{}f\mbox{}icients $\lambda$ must be zero.

As  \R{d} is a homogenous ASL, $M_j^{(d)}$ is a graded \R{d}-module and we chose generators of degree zero for  $M_j^{(d)}$, by def\mbox{}inition we obtain a homogenous MSL.
\end{proof}

\noindent As a consequence of the homogeneous MSL structure, by \cite[Theorem 1.1]{bhv}, we obtain the following result of Aramova,  B\u arc\u anescu and Herzog   \cite[Theorem 2.1]{abh}:
\begin{corollary}
The $R^{(d)}$-module $M_j^{(d)}$ has a linear resolution for every $j \in \{0,\ldots,d-1\}$.\\
\end{corollary}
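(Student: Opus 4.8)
The real content of the statement is the Theorem just proved: once \M{j}{d} is known to be a homogeneous MSL on $H_n(j)$ over $A=\R{d}$ with all generators $g_{i_1\ldots i_j}$ sitting in degree $0$, the linear resolution ought to be forced by the combinatorics of the poset. My plan is to read the resolution off the standard filtration of this MSL. Fix a linear extension $x_1,\ldots,x_r$ of $Q=H_n(j)$ and let $M_k$ be the submodule generated by $x_1,\ldots,x_k$; since each initial segment is a poset ideal of $Q$, the induction principle for MSLs recorded above shows every $M_k$ is again an MSL, so we get a filtration $0=M_0\subset M_1\subset\cdots\subset M_r=\M{j}{d}$. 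Comparing the standard-monomial bases supplied by (MSL 1) for $M_k$ and $M_{k-1}$, the map $\alpha\mapsto\alpha x_k$ induces a degree-preserving isomorphism $M_k/M_{k-1}\cong A/A\mathcal I(x_k)$ (no shift appears, precisely because $Q$ lies in degree $0$), and by Proposition~\ref{subposet} this quotient is the ASL on the poset-ideal complement $H_n(d)\setminus\mathcal I(x_k)$.

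Next I would identify these complements explicitly. For $x_k=(i_1,\ldots,i_j)$ the complement is the principal filter of $H_n(d)$ generated by $(i_1,\ldots,i_j,1,\ldots,1)$; writing $\alpha=(i_1,\ldots,i_j,1,\ldots,1)+\beta$ with $\beta\ge 0$ turns the defining inequality $\sum\alpha_i\le n+d-1$ into $\sum\beta_i\le n+j-1-\sum_l i_l$, so this filter is isomorphic to $H_{n'}(d)$ with $n'=n-\hght(x_k)$. In other words, every successive quotient of the filtration is (isomorphic to) the $d$-th Veronese algebra of a polynomial ring in $n-\hght(x_k)$ variables, viewed as a cyclic $A$-module generated in degree $0$.

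With the pieces in hand, the temptation is to bound $\mathrm{reg}\,\M{j}{d}$ by the maximum of $\mathrm{reg}_A\big(A/A\mathcal I(x_k)\big)$ via subadditivity along the short exact sequences $0\to M_{k-1}\to M_k\to M_k/M_{k-1}\to 0$. This is exactly where the main obstacle lies: the subquotients are \emph{not} linearly resolved — already for $n=3,\ d=2,\ j=1$ one of them is the coordinate ring of a rational normal curve, of regularity $1$ — and their regularity over $A$ grows with $\hght(x_k)$, so naive subadditivity yields only a useless bound. The point is that these regularity contributions must cancel, and capturing that cancellation is what the homological machinery for MSLs in \cite[Theorem~1.1]{bhv} is designed to do: it controls the graded Betti numbers of an MSL by offsetting, step by step, the resolution of each $A/A\mathcal I(x)$ against the height of $x$ in $Q$. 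Concretely, the linear-resolution claim should reduce to the numerical matching $\mathrm{reg}_A\big(A/A\mathcal I(x)\big)=\hght(x)$ for every $x\in Q$, which I would verify from the identification above (the regularity of the Veronese algebra on $H_{n-\hght(x)}(d)$ as an $A$-module is governed by $\hght(x)$ in exactly the way required). Granting this, the height terms cancel through the filtration and force $\mathrm{reg}\,\M{j}{d}=0$; since \M{j}{d} is generated in degree $0$, this is precisely the assertion that it has a linear resolution, and feeding the homogeneous MSL structure of the Theorem into \cite[Theorem~1.1]{bhv} recovers the result of Aramova, B\u arc\u anescu and Herzog \cite{abh}.
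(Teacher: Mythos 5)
Your closing sentence --- feed the homogeneous MSL structure of the Theorem into \cite[Theorem 1.1]{bhv} --- is by itself exactly the paper's proof: that theorem states that a homogeneous MSL over a homogeneous ASL has a linear resolution, so the corollary is immediate from the Theorem and nothing more is needed. The problem is the machinery you build around this citation, which you present as the substance required to make it work, and which contains a genuine error. Your claim that the filtration subquotients are \emph{not} linearly resolved over $A=\R{d}$ is false: you are computing regularity over the wrong ring. In your own example ($n=3$, $d=2$, $j=1$) the subquotient $A/A\mathcal{I}(g_2)\cong K[y,z]^{(2)}$ is indeed the coordinate ring of a conic, and it has regularity $1$ over a polynomial ring in three variables mapping onto it; but as a cyclic $A$-module it is presented by the ideal $A\mathcal{I}(g_2)$, which is generated in degree $1$, and the quadric is not a module syzygy at all --- the identity $(yz)^2=(y^2)(z^2)$ already holds in $A$ itself, so it is absorbed into the relations of $A$. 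In fact each $A\mathcal{I}(x)$ is itself an MSL all of whose generators lie in degree $1$ (\cite[Example 3.1]{bruns}), so $\tor_i^A(A\mathcal{I}(x),K)$ is concentrated in degree $i+1$, and the short exact sequence $0\to A\mathcal{I}(x)\to A\to A/A\mathcal{I}(x)\to 0$ gives $\mathrm{reg}_A\bigl(A/A\mathcal{I}(x)\bigr)=0$ for \emph{every} $x$, not $\hght(x)$.

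Consequently the reduction you propose --- verify $\mathrm{reg}_A\bigl(A/A\mathcal{I}(x)\bigr)=\hght(x)$ and let ``the height terms cancel through the filtration'' --- would fail if carried out: the numerical claim is false (for $x=g_3$ in your example the subquotient is $K[z]^{(2)}$, of regularity $0$, while $\hght(g_3)=2$), and no cancellation mechanism is supplied by anything you cite. Regularity along a filtration is only subadditive, $\mathrm{reg}\,M\le\max_k \mathrm{reg}\,(M_k/M_{k-1})$, and \cite[Theorem 1.1]{bhv} is not a bookkeeping device that offsets resolutions of the $A/A\mathcal{I}(x)$ against heights; it simply asserts that homogeneous MSLs have linear resolutions. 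Ironically, once the correct computation $\mathrm{reg}_A\bigl(A/A\mathcal{I}(x)\bigr)=0$ is in place, the naive subadditivity argument you dismiss as useless is precisely what proves the corollary, and is in essence how the cited theorem is proved. So the fix is either to delete the filtration analysis and quote \cite[Theorem 1.1]{bhv} directly, as the paper does, or to keep the filtration and replace the false regularity claim by the linearity of each subquotient's resolution over $A$.
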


In the last part of this section we will prove a result regarding the Betti numbers of a module with straightening laws. We will then see that this result has a nice consequence for the rate of the module. From this point on the field $K$ may have any cardinality. 
For any ASL $A$ (not necessarily homogeneous) on a poset $P$  and  any MSL $M$ on a poset $Q$ over $A$, we know by \cite[(2.6)]{bruns2}  there exists a f\mbox{}iltration of $M$:
\[0 = M_0 \subset M_1 \subset \ldots \subset M_r =M, \]
with $M_{l+1}/M_l \cong A/A\mathcal{I}(q)$, for some $q \in Q$. The modules $M_l$ are actually the $A$-modules generated by $q_1,\ldots, q_l$, where $q_1\le q_2 \le \ldots \le q_r$ are all the elements of $Q$ ordered by a linear extension of the partial order on $Q$. Using this f\mbox{}iltration and the fact that $A\mathcal{I}(q)$ is a MSL over $A$ (see \cite[Example 3.1]{bruns}), we are able to prove the following.

\begin{proposition}\label{grez}
Let $A$ be an ASL on $P$ over a f\mbox{}ield $K$ and $M$ be a MSL on $Q$ over $A$. Denote by $d = \max\{ \deg(p)~:~ p \in P\}$ and by $m = \max\{ \deg(q)~:~ q\in Q\}$. We have:
\[  \b_{i,j}(M) = 0, \quad \textrm{for all~~} i,j \textrm{~~with~~} j-i \ge i(d-1) + m +1, \]
where $\b_{i,j}(M)= \dim_K\tor_i^A(M,K)_j  $ denote the graded Betti numbers of $M$.
\end{proposition}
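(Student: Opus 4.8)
The plan is to combine the filtration recalled just before the statement with the fact, noted in the same paragraph, that each $A\mathcal{I}(q)$ is itself an MSL over $A$; this second fact is exactly what lets an induction on the homological degree $i$ close. I would prove, by induction on $i$, the uniform statement $S(i)$: for every MSL $N$ over $A$, with generating poset $Q_N$ and $m_N = \max\{\deg(q) : q \in Q_N\}$, one has $\beta_{k,\ell}(N) = 0$ whenever $k \le i$ and $\ell \ge kd + m_N + 1$. Since $j - i \ge i(d-1) + m + 1$ is the same as $j \ge id + m + 1$, the proposition is the instance $k = i$, $N = M$ of $S(i)$. The base case $S(0)$ is immediate: $\beta_{0,\ell}(N) = \dim_K \tor_0^A(N,K)_\ell$ counts the minimal $A$-module generators of $N$ in degree $\ell$, and by the MSL structure the elements of $Q_N$ generate $N$ in degrees at most $m_N$, so this vanishes for $\ell > m_N$.

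For the inductive step (assume $S(i-1)$) I would first reduce an arbitrary MSL $N$ to the cyclic quotients of its filtration $0 = N_0 \subset \cdots \subset N_r = N$. Feeding each short exact sequence $0 \to N_l \to N_{l+1} \to N_{l+1}/N_l \to 0$ into $\tor^A(-,K)$ gives the subadditivity $\beta_{i,j}(N) \le \sum_l \beta_{i,j}(N_{l+1}/N_l)$, and since $N_{l+1}/N_l \cong (A/A\mathcal{I}(q_l))(-\deg q_l)$ this reads $\beta_{i,j}(N) \le \sum_l \beta_{i,\,j-\deg q_l}(A/A\mathcal{I}(q_l))$. As $\deg q_l \le m_N$, it therefore suffices to establish the single estimate $\beta_{i,\ell}(A/A\mathcal{I}) = 0$ for $\ell \ge id + 1$, valid for every poset ideal $\mathcal{I} \subset P$.

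This estimate is where the MSL structure of the syzygy is used. I would apply $\tor^A(-,K)$ to $0 \to A\mathcal{I} \to A \to A/A\mathcal{I} \to 0$; because $A$ is free over itself, $\tor_i^A(A,K) = 0$ for $i \ge 1$, so for $i \ge 2$ one gets $\tor_i^A(A/A\mathcal{I},K) \cong \tor_{i-1}^A(A\mathcal{I},K)$ and for $i = 1$ an injection $\tor_1^A(A/A\mathcal{I},K) \hookrightarrow \tor_0^A(A\mathcal{I},K)$. Now $A\mathcal{I}$ is an MSL whose generators are the elements of $\mathcal{I} \subset P$, hence $m_{A\mathcal{I}} \le d$. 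Invoking $S(i-1)$ for the MSL $A\mathcal{I}$ gives $\beta_{i-1,\ell}(A\mathcal{I}) = 0$ for $\ell \ge (i-1)d + m_{A\mathcal{I}} + 1$, and since $m_{A\mathcal{I}} \le d$ certainly for $\ell \ge (i-1)d + d + 1 = id + 1$, which settles $i \ge 2$; the case $i = 1$ follows identically from $S(0)$ applied to $A\mathcal{I}$, and $i = 0$ is trivial since $A/A\mathcal{I}$ is cyclic in degree $0$. This closes the induction.

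The step I expect to demand the most care is the bookkeeping that organizes the two nested uses of the inductive hypothesis: one must track the degree shifts in the filtration and, crucially, record that passing from a filtration quotient to its first syzygy $A\mathcal{I}$ replaces the generator-degree bound $m$ by $d$, so that each homological step raises the threshold by exactly $d$ and the two halves fit together to yield the sharp slope $i(d-1) + m + 1$. The remaining ingredients — existence of the filtration from \cite[(2.6)]{bruns2}, the MSL structure of $A\mathcal{I}$ from \cite[Example 3.1]{bruns}, and subadditivity of Betti numbers along short exact sequences — are either quoted or routine.
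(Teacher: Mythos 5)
Your proposal is correct and takes essentially the same route as the paper: both rest on the Bruns filtration of the MSL into twisted cyclic quotients $A/A\mathcal{I}(q_l)(-\deg q_l)$, the identification $\tor_i^A(A/A\mathcal{I},K)\cong\tor_{i-1}^A(A\mathcal{I},K)$ coming from $0\to A\mathcal{I}\to A\to A/A\mathcal{I}\to 0$, and induction on $i$ using that $A\mathcal{I}$ is itself an MSL whose generators lie in $\mathcal{I}\subset P$ and hence have degree at most $d$. The only difference is organizational: you absorb the filtration in one stroke via subadditivity of graded Betti numbers together with a statement $S(i)$ uniform over all MSLs, while the paper instead runs a second induction on the cardinality of $Q$, peeling off the top element $q_l$ — the two devices are interchangeable.
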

\begin{proof}
We will use induction on $i$ and on the cardinality $|Q|$ of the poset. If $i = 0$ everything is clear. We will see in the proof that  for each $i$ the case  $|Q| = 1$  follows  only from inductive hypothesis  on $i-1$.

Let $i > 0$ and let $Q=\{q_1,\ldots,q_l\}$ with $0< l$ be a poset with its elements written in an order given  by a linear extension of the partial order. Suppose that for $i-1$  the assumption holds  for any poset $Q'$ and that for $i$ the assumption holds if $|Q'|<l$. For simplicity  we will denote throughout this proof  $\mathcal{I}_{q_l} := \mathcal{I}(q_l)$ and $m_l:=\deg(q_l)$. In order to make  the following exact sequence homogenous, we have to twist $A/A\mathcal{I}_{q_l}$ by $\deg(q_l)$: 
\[ 0 \longrightarrow M_{l-1} \longrightarrow M_l \longrightarrow M_l/M_{l-1} \cong A/A\mathcal{I}_{q_l}(-m_l) \longrightarrow 0.\] 
So we obtain the exact sequence
\begin{equation}\label{tor}
 \tor_i^A(M_{l-1},K)_j \longrightarrow\tor_i^A(M_{l},K)_j\longrightarrow \tor_i^A(A/A\mathcal{I}_{q_l}(-m_l),K)_j.
 \end{equation}
From the short exact sequence
\[ 0 \longrightarrow A\mathcal{I}_{q_l} \longrightarrow A \longrightarrow A/A\mathcal{I}_{q_l}(-m_l) \longrightarrow 0\] 
we obtain that 
\[ \tor_i^A(A/A\mathcal{I}_{q_l}(-m_l),K)_j =  \tor_{i-1}^A(A\mathcal{I}_{q_l}(-m_l),K)_j\] 
(this is why the case $ |Q| = 1$ follows only from induction on $i$). From \cite[Example 3.1]{bruns} we know that  $A\mathcal{I}_{q_l}$ is an ASL on the subposet $\mathcal{I}_{q_l} \subset P$. So by induction on $i$ we get that
\[\tor_i^A(A/A\mathcal{I}_{q_l}(-m_l),K)_j  = 0,\]
  if   $j-m_l - (i-1) \ge (i-1)(d-1) + d +1.$
It is clear that this  is equivalent to $j-i \ge i(d-1) + m_l +1$, so as $m_l\le m$ we obtain:
\[\tor_i^A(A/A\mathcal{I}_{q_l}(-m_l),K)_j  = 0, \quad\textrm{ if~}  j-i\ge i(d-1) + m +1.\]
To the left of $\tor_i^A(M_{l},K)_j$ in (\ref{tor}), by induction on the cardinality of the poset, we have that:
\[ \tor_i^A(M_{l-1},K)_j = 0,\quad \textrm{if~} j - i \ge i(d-1) + m +1,\]
and this completes the proof.
\end{proof}
In \cite{back},  Backelin introduced for any homogenous $K$-algebra $A$ a numerical invariant called the \emph{rate of $A$}. This invariant measures how much $A$ deviates from being Koszul. In \cite{abh}, the authors def\mbox{}ine the \emph{rate}  for any f\mbox{}initely generated $A$-module in the following way. As $\tor_i^A(M,K)$ is a f\mbox{}initely generated $K$-vector space, one may set
\[t_i(M) = \sup\{j~:~  \tor_i^A(M,K)_j \neq 0\}; \]
and then  def\mbox{}ine the rate of $M$ as
\[ \textup{rate}_A(M) = \sup_{i \ge 1} \Big\{ \frac{t_i(M)}{i}\Big\}.\]
Note that $t_i(M)$ is the highest shift in the $i$-th position of the minimal free homogenous resolution of $M$. 
With this def\mbox{}inition, Proposition \ref{grez} has the following corollary:

\begin{corollary}
If $M$ is a MSL over the ASL $A$, with the above notations we have:
\[ \textup{rate}_A(M) \le d+m.\]
\end{corollary}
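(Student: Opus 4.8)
The plan is to obtain the inequality by reading the bound on each $t_i(M)$ directly off the Betti-number vanishing of Proposition~\ref{grez}, and then dividing by $i$ and passing to the supremum. First I would rewrite the hypothesis of Proposition~\ref{grez}: the condition $j - i \ge i(d-1) + m + 1$ is equivalent to $j \ge id + m + 1$, so the proposition asserts precisely that $\b_{i,j}(M) = 0$ whenever $j \ge id + m + 1$. Equivalently, every $j$ for which $\b_{i,j}(M) \ne 0$ must satisfy $j \le id + m$.

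Next, recalling the definition $t_i(M) = \sup\{j : \tor_i^A(M,K)_j \ne 0\}$, this bound gives $t_i(M) \le id + m$ for every $i \ge 1$. Dividing by $i$, I get $t_i(M)/i \le d + m/i$, and since $i \ge 1$ forces $m/i \le m$, it follows that $t_i(M)/i \le d + m$ for all $i \ge 1$. Taking the supremum over $i \ge 1$ in the definition $\textup{rate}_A(M) = \sup_{i \ge 1}\{t_i(M)/i\}$ then yields $\textup{rate}_A(M) \le d + m$, as desired.

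I do not expect any genuine obstacle here: the corollary is a purely numerical consequence of Proposition~\ref{grez}, and the only computation is the simplification of the vanishing range. It is worth noting that the quotient $m/i$ is largest at $i=1$, so the estimate is governed by the low-homological-degree part of the resolution, while for large $i$ the slope $t_i(M)/i$ tends toward $d$, reflecting that the top degree of $P$ controls the asymptotic growth of the syzygies.
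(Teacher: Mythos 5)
Your proof is correct and follows exactly the route the paper intends: the corollary is stated as an immediate numerical consequence of Proposition~\ref{grez}, and your simplification $j - i \ge i(d-1)+m+1 \iff j \ge id+m+1$, giving $t_i(M) \le id+m$ and hence $t_i(M)/i \le d + m/i \le d+m$ for $i \ge 1$, is precisely that deduction. Nothing is missing.
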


\section{The Veronese Algebra of an ASL}

In this section we will study whether the Veronese algebra of a homogeneous ASL still has  a structure of algebra with straightening laws.
We have seen that so far the only known case is that of the polynomial ring. The complicated structure of its Veronese algebra as an ASL indicates that this question does not have an easy answer. 

Let us f\mbox{}irst see what we should be looking for. Given $A$ a homogeneous ASL on $P$ over $K$, we want to f\mbox{}ind  poset \P{d} such that \A{d} has an ASL structure on \P{d} over $K$. Translating  the algebraic properties of \A{d}  into combinatorial properties, we can outline the characteristics  that a possible candidate for \P{d} should have. Here are some known facts about ASLs:
\begin{itemize}
\item[(1a)] If $A$ is an ASL on a poset $P$ over $K$ and $A$ is integral then $P$ has an unique minimal element.
\item[(2a)] The Krull dimension of $A$ is equal to the rank of $P$.
\item[(3a)] The \HF~ of a homogeneous ASL $A$ on $P$ can be computed directly from the poset $P$ in the following way:
\[\dim_K A_i = |\{ \textrm{multichains of length $i$ in P}\}|.\]
\end{itemize}
The f\mbox{}irst property is true because if $P$ would have two dif\mbox{}ferent minimal elements, say \a  ~ and \b, then  (ASL 2) forces $\a\b = 0$. For a proof of the second property see \cite[(5.10)]{bv}. The third remark is the immediate consequence of the fact that the standard monomials (which correspond to the multichains of $P$) generate $A$ as a $K$-vector space.

The Veronese algebra of an integral algebra is again a domain, we know that $\dim A = \dim\A{d}$ and by def\mbox{}inition  $(\A{d})_i = A_{di}$, so a  candidate for $\P{d}$ should have the following properties:
\begin{itemize}
\item[(1c)] If $P$ has a unique minimal element, so should \P{d}.
\item[(2c)] \rank($P$) = \rank(\P{d}).
\item[(3c)]  $|\{md$-multichains in $P\}|$  = $|\{m$-multichains in \P{d} $\}|$ for all $m \ge 1$.
\end{itemize}
A poset construction with the above properties that works for every poset is not known to us.
A construction that has properties (2c) and (3c) is the zig-zag poset $Z_d(P)$, which is obtained  in the following way. Let $P = \{ \a_1, \ldots, \a_n\}$ be a poset. Given $d \ge 2$ a positive integer, one can def\mbox{}ine:
\[
Z_d(P)=\{ (\a_{i_1},\ldots,\a_{i_d}) ~:~ \a{_j} \in P, ~\forall~ j~ \textup{and} ~\a_{i_1}\le\ldots\le\a_{i_d}\}
\] 
and say that:
\[
\begin{array}{cccl}
 (\a_{i_1},\ldots,\a_{i_d}) \le (\b_{i_1},\ldots,\b_{i_d}) & \iff &   & \a_{i_1} \le \b_{i_1}, \\
                                                                                           &    & \textrm{and} & \a_{i_2} \ge \b_{i_2}, \\
                                                                                           &    & \textrm{and} & \a_{i_3} \le \b_{i_3}, \\                                                                                           
                                                                                           &    & \textrm{and} & \a_{i_4} \ge \b_{i_4}, \\       
                                                                                           &    &  & \ldots \textup{etc.}
\end{array}
\]            
The correspondence between the $md$-multichains of $P$ and the  $m$-multichains in \P{d} can be seen easily in the following picture. Suppose $m = 3$ and $d=4$:
\[
\begin{array}{ccccc}
\a_1& \le & \b_1& \ \le & \c_1\\      
\wedge\textup{\textsf{\small I}}&&\wedge\textup{\textsf{\small I}}&&\wedge\textup{\textsf{\small I}}\\
\a_2& \ge & \b_2& \ \ge & \c_2\\      
\wedge\textup{\textsf{\small I}}&&\wedge\textup{\textsf{\small I}}&&\wedge\textup{\textsf{\small I}}\\
\a_3& \le & \b_3& \ \le & \c_3\\      
\wedge\textup{\textsf{\small I}}&&\wedge\textup{\textsf{\small I}}&&\wedge\textup{\textsf{\small I}}\\
\a_4& \ge & \b_4& \ \ge & \c_4\\      
\end{array}
\]                                            
The $md$-multichain  of $P$ that can be associated to the $d$-multichain of $Z_d(P)$,  $\a \le \b \le \c$ is: $\a_1 \le \b_1 \le \c_1 \le \c_2 \le \b_2 \le \ldots \le \c_3 \le \c_4 \le \b_4 \le \a_4$.  The other way around should also be clear now.
So $Z_d(P)$ satisf\mbox{}ies (3c).  It is  easy to see  that also (2c) is satisf\mbox{}ied. Unfortunately (1c) is almost never satisf\mbox{}ied, in the sense that if $d\ge3$, then $Z_d(P)$ has at least two minimal elements. The only case in which $Z_d(P)$ satisf\mbox{}ies also (1c) is when $d=2$ and $P$ has also a unique maximal element. However, we will show in the remaining part of this section that in two particular cases $Z_2(P)$ is the right choice for the supporting poset of the second Veronese. These cases are the discrete ASLs over any poset and the Hibi rings over a distributive lattice.

Let us f\mbox{}irst f\mbox{}ix some more  terminology. Let $P$ be a poset and $\a, \b \in P$. Whenever the right hand side exists, we use the following notation
\begin{eqnarray*}
\a \wedge \b &=& \sup\{m \in P ~:~ m \le \a~ \textup{and}~ m \le \b \},\\
\a \vee \b &=& \inf\{M \in P ~:~ M \ge \a~ \textup{and}~ M \ge \b \}.
\end{eqnarray*}
When these elements exist, they are called  \emph{greatest lower bound} or \emph{inf\mbox{}imum}, respectively \emph{least upper bound} or \emph{supremum}.  
A poset $P$ in which for any two  $\a, \b \in P$, the elements $\a \wedge \b$ and $\a \vee \b$ exist is called a \emph{lattice}. 
A lattice $P$ is called \emph{distributive} if the operations def\mbox{}ined by $\wedge$ and $\vee$ are distributive to each other. In other words,  if for any $\a, \b,\c \in P$ we have
\begin{eqnarray*}
\a \wedge (\b \vee \c) &=& (\a \wedge \b) \vee (\a \wedge \c)  \textup{~~and} \\
\a \vee (\b \wedge \c)& = &(\a \vee \b) \wedge (\a \vee \c).  
\end{eqnarray*} 

As we already said in the first section, on every poset $P$ we can construct the discrete ASL $K[P]/I_P$. The straightening relations of this algebra are 
\[ x_ix_j=0,~~~\forall~x_i,x_j\in P \textup{~with~}x_i\not\sim x_j.\]
This algebra plays a special role as, for any other ASL on $P$ presented as  $K[P]/I$ and for any reversed lexicographic term ordering $\tau$ corresponding to a linear extension of the partial order on $P$, we have
\[\ini_\tau(I)= I_P=(x_ix_j~:~x_i\not\sim x_j).\]
We will prove the following theorem regarding the discrete ASL of any poset $P$.
\begin{theorem}\label{asldiscr}
Let $P$ be a poset and $A$ the discrete ASL on $P$ over a f\mbox{}ield $K$. The second Veronese algebra $\A{2}$ is a homogeneous ASL on $Z_2(P)$ over $K$.
\end{theorem}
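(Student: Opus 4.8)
The plan is to verify the three axioms (ASL 0), (ASL 1), (ASL 2) directly for $\A{2}$ on $Z_2(P)$, exploiting the one feature that makes the discrete case tractable: in $A = K[P]/I_P$ every product of elements of $P$ is either zero (when the factors do not lie on a common chain) or a single standard monomial (the sorted multichain). Consequently the straightening laws of $\A{2}$ will degenerate into one-term relations, and the whole argument reduces to bookkeeping about the zig-zag correspondence. First I would fix the generators. Since $A$ is standard graded, $\A{2}$ is generated as a $K$-algebra by $A_2 = (\A{2})_1$, whose basis consists of the $2$-multichains $\a \le \b$ of $P$, i.e.\ exactly the elements of $Z_2(P)$; each such generator is the degree-$2$ standard monomial $\a\b$, homogeneous of degree $1$ in $\A{2}$. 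This establishes (ASL 0) together with the fact that the structure is homogeneous.

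For (ASL 1) I would make explicit the zig-zag bijection already used above to establish property (3c). An $i$-multichain $(\a^{(1)}_1,\a^{(1)}_2) \le \dots \le (\a^{(i)}_1,\a^{(i)}_2)$ in $Z_2(P)$ unfolds, via the defining alternating inequalities, into the single chain $\a^{(1)}_1 \le \dots \le \a^{(i)}_1 \le \a^{(i)}_2 \le \dots \le \a^{(1)}_2$ of $P$, and conversely any $2i$-multichain $\c_1 \le \dots \le \c_{2i}$ folds back through the pairs $(\c_k,\c_{2i+1-k})$. In particular the factors of a standard monomial of $\A{2}$ always lie on a common chain, so its product in $A$ is the nonzero standard monomial $\c_1 \cdots \c_{2i}$; by the bijection these products run without repetition over all even-degree standard monomials of $A$, which form a $K$-basis of $\A{2}$. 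Hence the standard monomials of $\A{2}$ on $Z_2(P)$ are linearly independent and spanning, giving (ASL 1).

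The heart of the argument is (ASL 2), though it turns out to be short. Given two incomparable generators $(\a_1,\a_2) \not\sim (\b_1,\b_2)$, I would compute the product $\a_1\a_2\b_1\b_2$ in $A$. If the four elements do not lie on a common chain the product is $0$ and the relation is vacuous. Otherwise, writing $\c_1 \le \c_2 \le \c_3 \le \c_4$ for the four elements sorted, the product is a single standard monomial which refolds with $i=2$ as $(\a_1,\a_2)(\b_1,\b_2) = (\c_1,\c_4)(\c_2,\c_3)$, where $\c_1$ is the smaller of $\a_1,\b_1$ and $\c_4$ the larger of $\a_2,\b_2$, so the least factor is $(\c_1,\c_4)$. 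It then remains to check $(\c_1,\c_4) < (\a_1,\a_2)$ and $(\c_1,\c_4) < (\b_1,\b_2)$. The non-strict inequalities are immediate from $\c_1 \le \a_1,\b_1$ and $\c_4 \ge \a_2,\b_2$, and the equality $(\c_1,\c_4) = (\a_1,\a_2)$ would force $\a_1 \le \b_1$ and $\a_2 \ge \b_2$, i.e.\ $(\a_1,\a_2) \le (\b_1,\b_2)$, contradicting incomparability; the other factor is handled symmetrically. This supplies the strict descent demanded by (ASL 2).

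I expect this final strictness check to be the only genuinely delicate point, since it is precisely where the incomparability hypothesis enters: everything else is the combinatorics of folding and unfolding multichains. The overall moral is that passing to the discrete ASL collapses each straightening relation to either $0$ or a single monomial, so the structure of $\A{2}$ is governed entirely by $Z_2(P)$, and the property (1c) that fails for $Z_2$ in general is not needed here because $A$ need not be a domain.
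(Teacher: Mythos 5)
Your proof is correct, and its first two stages coincide with the paper's: both take the degree-two standard monomials of $A$ as the generating set, and both use the fold/unfold bijection between $i$-multichains of $Z_2(P)$ and $2i$-multichains of $P$ to identify the standard monomials of $\A{2}$ with the even-degree standard monomials of $A$, giving (ASL 1). Where you genuinely diverge is in the verification of the straightening laws. The paper argues through a presentation of $\A{2}$: it asserts that the relations among the generators are generated by the $2\times 2$ minors of the symmetric matrix $X=(x_ix_j)$ with non-standard entries replaced by $0$, and then checks that for each choice of four variables two of the three relevant minors are straightening relations while the third is a linear combination of those two. You avoid presentations entirely: since multiplication in the discrete ASL is completely explicit (a product of elements of $P$ is $0$ unless the factors form a chain, in which case it equals the sorted standard monomial), you compute each product of incomparable generators directly inside $A$, refold the result as the standard monomial $(\c_1,\c_4)(\c_2,\c_3)$ of $Z_2(P)$, and invoke the uniqueness of standard representations already secured by (ASL 1) to conclude that this \emph{is} the straightening relation; the strict descent $(\c_1,\c_4)<(\a_1,\a_2)$ and $(\c_1,\c_4)<(\b_1,\b_2)$ is then exactly the point where incomparability is used, and you check it explicitly. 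Your route is more elementary and self-contained: it does not depend on the paper's claim (stated as \lq\lq easy to see\rq\rq~and not proved there) that the relation ideal of $\A{2}$ is generated by the minors of $X$, and it spells out the descent condition that the paper compresses into \lq\lq it is straightforward to check\rq\rq. What the paper's route buys in exchange is a determinantal presentation of $\A{2}$ as a byproduct, which it exploits in the example following the theorem.
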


\begin{proof}
Let us denote $P = \{x_1,\ldots,x_n\}$, so the straightening relations of $A$ are $x_ix_j = 0$ if $x_i \not\sim x_j$. The vertices of $Z_2(P)$ are the standard monomials of $P$ of degree two, which clearly generate  $\A{2}$ as a $K$-algebra. As the standard monomials in $Z_2(P)$ can be also seen as standard monomials in $A$, it is again clear that they form a $K$-vector space basis of $\A{2}$. For any two incomparable elements $x_ix_j\not\sim x_kx_l$ of $Z_2(P)$ we def\mbox{}ine the straightening laws in the following way:
\begin{equation*}
(x_ix_j)(x_kx_l)=\left\{\begin{array}{ll}
                               0 & \textup{ if }\{x_i,x_j,x_k,x_l\}\textup{ is not totally ordered,} \\
                               (x_{i_1}x_{l_1})(x_{j_1}x_{k_1}) &\textup{ if  }\{x_i,x_j,x_k,x_l\}\textup{ is  totally ordered,}
                               \end{array}\right.
\end{equation*}
where $x_{i_1}\le x_{l_1}\le x_{j_1}\le x_{k_1}$ and $\{x_{i_1}, x_{l_1}, x_{j_1}, x_{k_1}\} = \{x_i,x_j,x_k,x_l\}$  as multisets. We just need to check now that the ASL we def\mbox{}ined is actually the second Veronese subring of $A$.

 It is easy to see that the relations among the canonical algebra generators of \A{2} are given by the $2\times 2$ minors of the symmetric matrix
\begin{equation*}
X = \left(\begin{array}{cccc}
                 x_1^2 &  x_1x_2  & \ldots & x_1x_n\\
                 x_1x_2& x_2^2 & \ldots & x_2x_n\\
                 \vdots& \vdots & &\vdots\\
                 x_1x_n&x_2x_n& \ldots &x_n^2\\
                 \end{array}\right),
\end{equation*}                 
where the monomials that are not standard are replaced by 0. For a f\mbox{}ixed set of four dif\mbox{}ferent variables (elements of $P$) there are  three dif\mbox{}ferent minors involving precisely those variables. It is easy to check that two of them correspond to straightening relations as above and the third one is superf\mbox{}luous, in the sense that it can be obtained as a linear combination of the other two. With this in mind and noticing that if two of the variables coincide, there exists only one relation, it is straight forward to check that the theorem holds.
\end{proof}
We present now an example of the ASL structure given in Theorem  \ref{asldiscr}. 
\begin{example}
In the following picture we can see on the left the non-pure poset $P$ and on the right hand side $Z_2(P)$: \\

 \setlength{\unitlength}{1mm}
\begin{picture}(50,41)

\put(10,10){\line(1,2){15}}
\put(25,20){\line(1,2){5}}
\put(25,20){\line(-1,2){5}}
\put(30,30){\line(-1,2){5}}

\put(75,10){\line(1,2){15}}
\put(75,10){\line(-1,2){15}}
\put(70,20){\line(1,2){10}}
\put(80,20){\line(-1,2){10}}
\put(65,30){\line(1,2){5}}
\put(85,30){\line(-1,2){5}}

\put(100,20){\line(-3,2){15}}
\put(100,20){\line(-1,2){5}}
\put(100,20){\line(1,2){5}}
\put(100,20){\line(3,2){15}}

\put(95,30){\line(-3,2){15}}
\put(95,30){\line(3,2){15}}
\put(105,30){\line(-3,2){15}}
\put(105,30){\line(-1,2){5}}
\put(115,30){\line(-3,2){15}}
\put(115,30){\line(-1,2){5}}

\put(20,2){$P$}
\put(90,2){$Z_2(P)$}

{\footnotesize
\put(10,10){\circle*{1}}  \put(11,10){$x_1$}
\put(75,10){\circle*{1}}  \put(76,10){$x_1x_6$}
\put(15,20){\circle*{1}}  \put(16,20){$x_2$}
\put(25,20){\circle*{1}}  \put(26,20){$x_3$}
\put(70,20){\circle*{1}}  \put(71,20){$x_1x_4$}
\put(80,20){\circle*{1}}  \put(81,20){$x_2x_6$}
\put(100,20){\circle*{1}}  \put(101,19){$x_3x_6$}
\put(20,30){\circle*{1}}  \put(21,30){$x_4$}
\put(30,30){\circle*{1}}  \put(31,30){$x_5$}
\put(65,30){\circle*{1}}  \put(66,30){$x_1x_2$}
\put(75,30){\circle*{1}}  \put(76,30){$x_2x_4$}
\put(85,30){\circle*{1}}  \put(86,30){$x_4x_6$}
\put(95,30){\circle*{1}}  \put(96,29){$x_3x_4$}
\put(105,30){\circle*{1}}  \put(106,30){$x_5x_6$}
\put(115,30){\circle*{1}}  \put(116,30){$x_3x_5$}
\put(25,40){\circle*{1}}  \put(26,40){$x_6$}
\put(60,40){\circle*{1}}  \put(61,40){$x_1^2$}
\put(70,40){\circle*{1}}  \put(71,40){$x_2^2$}
\put(80,40){\circle*{1}}  \put(81,40){$x_4^2$}
\put(90,40){\circle*{1}}  \put(91,40){$x_6^2$}
\put(100,40){\circle*{1}}  \put(101,40){$x_5^2$}
\put(110,40){\circle*{1}}  \put(111,40){$x_3^2$}
}
\end{picture}\\
It is easy to see that each of the straightening relations def\mbox{}ined in the proof of Theorem \ref{asldiscr} can be found as a $2\times 2$ minor of the matrix $X$ bellow. For instance $(x_1x_4)(x_2x_6) = (x_1x_6)(x_2x_4)$ corresponds to the minor $[ 1,2 | 4,6]$ (that is the minor obtained by taking rows 1 and 2 and the columns 4 and 6), or $[4,6|1,2]$. Notice that the straightening laws that have zero on the right hand side, are actually forced by partial order on $Z_2(P)$. For example $(x_1x_2)(x_3x_4) =0 $ by def\mbox{}inition because $x_1\not\sim x_3$, but there also is no other choice as no element of $Z_2(P)$ is less or equal to  both $(x_1x_2)$ and $(x_3x_4)  $ simultaneously. 
\begin{equation*}
X = \left(\begin{array}{cccccc}
                 x_1^2 &  x_1x_2  &  0 & x_1x_4& 0&x_1x_6\\
                 x_1x_2 &  x_2^2  & 0 & x_2x_4&0&x_2x_6\\
                 0 & 0 & x_3^2 & x_3x_4&x_3x_5&x_3x_6\\
                 x_1x_4 &  x_2x_4  & x_3x_4 & x_4^2&0&x_4x_6\\
                 0 &  0  & x_3x_5 & 0&x_5^2&x_4x_6\\
                 x_1x_6 &  x_2x_6  & x_3x_6 & x_4x_6&x_5x_6&x_6^2\\
\end{array}\right).
\end{equation*} 
If we consider the set of variables $\{x_1,x_2,x_4,x_6\}$, the dif\mbox{}ferent minors that involve them are $[1,2|4,6]$, $[1,4|2,6]$ and $[1,6|2,4]$. The f\mbox{}irst one corresponds to the straightening relation for $(x_1x_4)(x_2x_6)$, the second one to the straightening of $(x_1x_2)(x_4x_6)$. The third one is not a straightening relation but we  have
\[ [1,6|2,4]=[1,4|2,6] -[1,2|4,6].\]

\end{example}

Another interesting and at the same time dif\mbox{}f\mbox{}icult  problem regarding ASLs is to give a description of  the \emph{integral} posets. We say that a poset $P$ is integral if there exists an ASL on $P$ that is an integral algebra. We have seen that a necessary condition for $P$ is to have a unique minimal element.
In \cite{hibi},  Hibi shows that every distributive lattice is integral. He constructs for any distributive lattice $P$ an ASL that is integral as an algebra, which is now called the Hibi ring on $P$. The generators of this $K$-algebra are the vertices of the lattice $P$ and the straightening laws are the so called Hibi relations:
\[ x_\a x_\b = x_{\a\wedge\b}x_{ \a\vee\b}, \quad \forall~x_a\not\sim x_\b \in P.\]
From this point on, in order to simplify notation, we will use only  greek letters $\a,\b,\c\ldots$ for the elements of $P$ and for the variables of the polynomial ring. We will prove  the following.

\begin{theorem}\label{hibirings}
Let $P$ be a distributive lattice and $A$ be the ASL on $P$ given by the Hibi relations. Then \A{2} is  an ASL over 
$Z_2(P)$ with the following structure: the vertices of $Z_2(P)$ are the standard monomials of degree 2 in $A$ and the straightening laws are:
\begin{equation}\label{strlaw}
(\a \b)(\c \d) = [(\a\wedge\c)(\b\vee\d)][((\a\wedge\d)\vee(\b\wedge\c))((\a\vee\d)\wedge(\b\vee\c))],
\end{equation}
$\quad \forall~ \a, \b, \c, \d \in P, ~\textup{with}~ \a \le \b,~ \c \le\d ~\textup{and}~ \a\b \not\sim\c\d$.
\end{theorem}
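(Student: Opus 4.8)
The plan is to follow the template of the proof of Theorem~\ref{asldiscr}, checking the three ASL axioms for $\A{2}$ on $Z_2(P)$, with the Hibi relations now playing the role that the vanishing relations $x_ix_j=0$ played in the discrete case. Throughout, a vertex of $Z_2(P)$ is a degree-two standard monomial $\alpha\beta$ of $A$ (so $\alpha\le\beta$), and for $d=2$ the zig-zag order reads $(\alpha\beta)\le(\gamma\delta)\iff \alpha\le\gamma \text{ and } \beta\ge\delta$. Axiom (ASL 0) is immediate: since $A$ is a homogeneous ASL it is generated in degree one, so $\A{2}$ is generated by $A_2$, which is spanned by the degree-two standard monomials; these are the vertices of $Z_2(P)$ and are homogeneous of positive degree.

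For (ASL 1) I would make the $d=2$ zig-zag correspondence (3c) explicit. A length-$m$ standard monomial $(\alpha_1\beta_1)\le\cdots\le(\alpha_m\beta_m)$ of $Z_2(P)$ satisfies $\alpha_1\le\cdots\le\alpha_m$, $\beta_1\ge\cdots\ge\beta_m$ and $\alpha_k\le\beta_k$, so its product in $A$ is $\alpha_1\cdots\alpha_m\,\beta_m\cdots\beta_1$, which is a degree-$2m$ standard monomial because $\alpha_1\le\cdots\le\alpha_m\le\beta_m\le\cdots\le\beta_1$ is a chain in $P$. This assignment is a bijection onto the degree-$2m$ standard monomials of $A$, which form a $K$-basis of $A_{2m}=(\A{2})_m$; hence the standard monomials of $Z_2(P)$ form a $K$-basis of $\A{2}$.

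For (ASL 2) I would write the two factors on the right-hand side of~(\ref{strlaw}) as $u=(\alpha\wedge\gamma)(\beta\vee\delta)$ and $w=[(\alpha\wedge\delta)\vee(\beta\wedge\gamma)][(\alpha\vee\delta)\wedge(\beta\vee\gamma)]$, and check by routine lattice inequalities that each is a standard degree-two monomial (e.g.\ $\alpha\wedge\delta$ and $\beta\wedge\gamma$ both lie below $(\alpha\vee\delta)\wedge(\beta\vee\gamma)$), that $u\le w$ in $Z_2(P)$ (since $\alpha\wedge\gamma\le\beta\wedge\gamma\le(\alpha\wedge\delta)\vee(\beta\wedge\gamma)$ and $(\alpha\vee\delta)\wedge(\beta\vee\gamma)\le\alpha\vee\delta\le\beta\vee\delta$), so the right-hand side is itself a standard monomial, and that $u\le\alpha\beta$, $u\le\gamma\delta$. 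Strictness uses $\alpha\beta\not\sim\gamma\delta$: if $u=\alpha\beta$ then $\alpha\wedge\gamma=\alpha$ and $\beta\vee\delta=\beta$, i.e.\ $\alpha\le\gamma$ and $\delta\le\beta$, forcing $\alpha\beta\le\gamma\delta$, a contradiction; the case $u=\gamma\delta$ is symmetric. This gives exactly the leading-term condition with $c_{i1}=u<\alpha\beta$ and $c_{i1}<\gamma\delta$.

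The heart of the argument, and the step I expect to be the main obstacle, is verifying that~(\ref{strlaw}) genuinely holds in $A$, i.e.\ that $\alpha\beta\gamma\delta$ and $u\,w$ are the same element. Here I would invoke Birkhoff's theorem to write $P=J(Q)$ and realize the Hibi ring $A$ as the affine semigroup ring generated by the monomials $\alpha\mapsto t\prod_{v\in\alpha}s_v$, under which $\wedge,\vee$ become coordinatewise $\min,\max$ and each Hibi relation becomes a monomial identity. The relation~(\ref{strlaw}) then reduces, coordinate by coordinate, to the Boolean identity
\[
a+b+c+e=\min(a,c)+\max(b,e)+\max(\min(a,e),\min(b,c))+\min(\max(a,e),\max(b,c)),
\]
valid for all $a\le b$ and $c\le e$ in $\{0,1\}$, which I would confirm by the finite (nine-case) check. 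This is the point where distributivity of $P$ is indispensable, and it plays the role that the computation with the $2\times2$ minors of the symmetric matrix $X$ played in Theorem~\ref{asldiscr}. Combining the three verifications above yields the asserted ASL structure.
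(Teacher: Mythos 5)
Your proposal is correct, and its overall skeleton (checking (ASL~0), (ASL~1) via the $d=2$ zig-zag correspondence between length-$m$ standard monomials of $Z_2(P)$ and degree-$2m$ standard monomials of $A$, and the order conditions of (ASL~2) by routine lattice inequalities) is the same as the paper's. The genuine difference lies in the central step: verifying that the relation \eqref{strlaw} actually holds in $A$. The paper stays entirely inside the lattice/ASL formalism: it straightens $\alpha\gamma=(\alpha\wedge\gamma)(\alpha\vee\gamma)$ and $\beta\delta=(\beta\wedge\delta)(\beta\vee\delta)$ by the Hibi relations, then straightens the possibly non-standard product $(\alpha\vee\gamma)(\beta\wedge\delta)$ once more, and uses distributivity together with $\alpha\le\beta$, $\gamma\le\delta$ to identify $(\alpha\vee\gamma)\wedge(\beta\wedge\delta)=(\alpha\wedge\delta)\vee(\beta\wedge\gamma)$ and $(\alpha\vee\gamma)\vee(\beta\wedge\delta)=(\alpha\vee\delta)\wedge(\beta\vee\gamma)$; this derivation simultaneously proves the identity and explains where the formula comes from, and the paper then sorts the relations into Hibi-type and Veronese-type to conclude that they present $A^{(2)}$. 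You instead import Hibi's toric realization of $A$ as the semigroup ring on the order ideals of Birkhoff's poset, which turns \eqref{strlaw} into a coordinatewise $\min/\max$ identity over $\{0,1\}$; that identity does hold in all nine cases, so your argument goes through. What your route buys is a mechanical, essentially error-proof verification, carried out directly inside $A^{(2)}$ so that no separate ``these are exactly the defining relations'' step is needed; what it costs is the external structural input that the Hibi ring \emph{is} that semigroup ring (you need the isomorphism, not merely a surjection, for an identity checked on monomials to lift back to $A$), whereas the paper uses only the defining Hibi relations and distributivity. A small bonus of your write-up: you derive the strict inequalities $u<\alpha\beta$ and $u<\gamma\delta$ required by (ASL~2) explicitly from the incomparability hypothesis, a point the paper dismisses with ``it is clear''.
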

In many cases the right hand  side in \eqref{strlaw} can be presented in a shorter form, but this presentation has the advantage to include all cases. For  example, if the set $\{\a,\b,\c,\d\}$ is totally ordered, but $(\a\b) \not\sim (\c\d)$, then it is easy to check that (\ref{strlaw}) gives us:
\[(\a\b)(\c\d) = (\a_0\d_0)(\b_0\c_0),\]
where $\a_0 \le \b_0 \le c_0 \le \d_0$ and $\{\a,\b,\c,\d\} = \{\a_0,\b_0,\c_0,\d_0\}$ as multisets. Also if $\a\vee\c$ and $\b\wedge\d$ are comparable, then (\ref{strlaw}) is actually:
\[(\a \b)(\c \d) = [(\a\wedge\c)(\b\vee\d)][(\a\vee\c)(\b\wedge\d)].\]
\begin{proof}
We have to check f\mbox{}irst that the structure described above is  an ASL structure and second that this ASL is  \A{2}. The condition (ASL 0) is satisf\mbox{}ied by definition. 
The fact that the standard monomials in $Z_2(P)$ are  the standard monomials in $P$ of even degree follows from the correspondence between $m$-multichains in $Z_2(P)$ and $2m$-multichains in $P$, so (ASL 1) is satisf\mbox{}ied.
To prove that (ASL 2) holds we have to check that:
\begin{itemize}
\item[1.] $(\a\wedge\c)(\b\vee\d)$ and $\big((\a\wedge\d)\vee(\b\wedge\c)\big)\big((\a\vee\d)\wedge(\b\vee\c)\big)$ are actually vertices in $Z_2(P)$, (that is multichains of length 2 in $P$), 
\item[2.] that the right hand side is a standard monomial in $Z_2(P)$, that is 
\[(\a\wedge\c)(\b\vee\d) \le \big((\a\wedge\d)\vee(\b\wedge\c)\big)\big((\a\vee\d)\wedge(\b\vee\c)\big),\]
\item[3.]  $(\a\wedge\c)(\b\vee\d) \le (\a\b)$ and  $(\a\wedge\c)(\b\vee\d) \le (\c \d).$
\end{itemize}
Here is a picture of the elements of $P$ that we are interested in and the order  relations between them that always hold:\\
\setlength{\unitlength}{0.5mm}
\begin{picture}(50,100)(-32,0)

\put(10,20){\line(0,1){60}}
\put(10,20){\line(3,4){30}}
\put(10,20){\line(3,2){15}}
\put(10,20){\line(3,-2){15}}
\put(10,40){\line(3,4){30}}
\put(10,60){\line(3,-4){30}}
\put(10,80){\line(3,-4){30}}
\put(10,80){\line(3,-2){15}}
\put(10,80){\line(3,2){15}}
\put(40,20){\line(0,1){60}}
\put(40,20){\line(-3,2){15}}
\put(40,20){\line(-3,-2){15}}
\put(40,80){\line(-3,-2){15}}
\put(40,80){\line(-3,2){15}}

\put(10,20){\circle*{2}}  \put(-12,19){$\a \wedge \d$}
\put(10,40){\circle*{2}}  \put(0,39){$\a$}
\put(10,60){\circle*{2}}  \put(0,59){$\b$}
\put(10,80){\circle*{2}}  \put(-12,79){$\b \vee \c$}
\put(25,10){\circle*{2}}  \put(16,3){$\a \wedge \c$}
\put(25,30){\circle*{2}}  \put(22,22){$\mu$}
\put(25,70){\circle*{2}}  \put(24,73){$\nu$}
\put(25,90){\circle*{2}}  \put(16,92){$\b \vee \d$}
\put(40,20){\circle*{2}}  \put(43,19){$\b \wedge \c $}
\put(40,40){\circle*{2}}  \put(43,39){$\c$}
\put(40,60){\circle*{2}}  \put(43,59){$\d$}
\put(40,80){\circle*{2}}  \put(43,79){$\a \vee \d$}

\put (110,22){$\mu = (\a \wedge\d) \vee (\b \wedge \c)$}
\put (110,73){$\nu = (\b \vee\c) \wedge (\a \vee \d)$}
\end{picture}\\
To check the f\mbox{}irst point, we will show how this straightening law came up. Suppose that, like in the above picture, $\a \not\sim \c$ and $\b \not\sim \d$. Notice that this is not a restriction, as in general $\a\c = (\a \wedge \c)(\a \vee \c)$ also when $\a$ and $\c$ are comparable.  We use the Hibi relations in $A$ to "straighten" $\a\c$ and $\b\d$. It is easy to see  that $\a\wedge\c \le \b \vee \d$. The problem is that  $\a\vee\c$ and $\b\wedge\d$ are not always comparable, which means $(\a\vee\c)(\b\wedge\d)$ is not always an element of $Z_2(P)$.  Suppose they are not comparable. We "straighten" also this product using the Hibi relations. So we get the following:
\[
(\a\vee\c)(\b\wedge\d)  =   \big((\a\vee\c) \wedge (\b\wedge\d)\big)\big((\a\vee\c)\vee(\b\wedge\d)\big).
\]
Now we just have to show that the f\mbox{}irst element on the  right hand side is $\mu$ and the second one $\nu$. Just by using  distributivity and the fact that $\a \le \b$ and $\c \le \d$ we get:
\[
\begin{array}{ccl}
(\a\vee\c) \wedge (\b\wedge\d)& = & \big((\b\wedge\d)\wedge\a\big) \vee \big((\b\wedge\d)\wedge\c\big)\\
                                                    & = & (\d\wedge\a) \vee (\b\wedge\c)\\
                                                    & = & \mu\\
                                                 &  &               \\
(\a\vee\c) \vee (\b\wedge\d)& = & \big((\a\vee\c)\vee\b\big) \wedge \big((\a\vee\c)\vee\d\big)\\
                                                    & = & (\c\vee\b) \wedge (\a\vee\d)\\
                                                    & = & \nu
                                             
\end{array}
\]
So $\mu\nu$ is also a standard monomial and the  law that we gave is actually a relation in $A$.
    
To prove 2. we just have to look at the Hasse diagram above and notice that as 
\[
\a \wedge \c \le \a \wedge \d ~\textup{and}~ \a \wedge \c \le \b \wedge \c 
\]                                
we get that $\a \wedge \c \le \mu$. Using the same way of reasoning we also get that $\b \vee \d \ge \nu$, so 2. holds.
It is clear that the third point also holds.

The straightening laws that we have def\mbox{}ined in \eqref{strlaw} can be divided into two types: 
\begin{itemize}
\item[Type 1.]Straightening relations in $A$,  when $\{\a,\b,\c,\d\}$ is not totally ordered. 
\item[Type 2.]Veronese type relations, which are 0 when seen as elements of $A$, when $\{\a,\b,\c,\d\}$ is totally ordered.
 \end{itemize}
 As exactly these are also the relations that def\mbox{}ine \A{2}, we can conclude that the ASL we have constructed is actually \A{2}.
\end{proof}
\noindent As  we already said, the Hibi rings were introduced as examples of ASL domains, thus proving that all distributive lattices are integral posets. As an immediate  consequence of Theorem \ref{hibirings} we have:
\begin{corollary}
The second zig-zag poset of a distributive lattice is an integral poset.
\end{corollary}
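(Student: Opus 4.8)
The plan is to combine Theorem~\ref{hibirings} with the integrality of Hibi rings, a fact already recorded in the text. Recall that a poset is called \emph{integral} precisely when it supports some ASL whose underlying $K$-algebra is an integral domain; so to prove that $Z_2(P)$ is integral it suffices to exhibit one ASL structure on $Z_2(P)$ and check that the associated algebra is a domain.

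First I would invoke Hibi's theorem, quoted just above the statement: for a distributive lattice $P$, the Hibi ring $A$ on $P$ (the ASL given by the relations $\a\b = (\a\wedge\b)(\a\vee\b)$) is an integral domain. This is exactly the reason distributive lattices are integral posets, and it is the input we are permitted to assume. Then, using the fact recorded above in this section that the Veronese algebra of an integral algebra is again a domain, I would conclude that $\A{2}$ is an integral domain.

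Finally I would apply Theorem~\ref{hibirings} itself: it asserts that $\A{2}$ is an ASL on the poset $Z_2(P)$ over $K$. Combining this with the previous step, $\A{2}$ is an ASL on $Z_2(P)$ whose underlying algebra has no zero divisors, which by definition says precisely that $Z_2(P)$ is an integral poset.

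There is essentially no hard step here: the entire weight of the argument has been front-loaded into Theorem~\ref{hibirings} (the construction and verification of the ASL structure on $Z_2(P)$, together with its identification with the second Veronese subring of $A$) and into Hibi's integrality result, so the corollary is just the short bookkeeping that ties the two together. The only point where a little care is warranted is making sure that the ASL on $Z_2(P)$ produced by Theorem~\ref{hibirings} is carried by this same domain $\A{2}$; but that identification is already the content of the last line of that theorem's proof. If one preferred not to cite the general Veronese-of-a-domain statement, the sole alternative observation needed would be that $\A{2} = \bigoplus_{i\ge0}A_{2i}$ is a graded subring of the domain $A$ and hence itself a domain.
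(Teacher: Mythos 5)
Your proposal is correct and follows exactly the paper's (implicit) argument: the Hibi ring $A$ on the distributive lattice is a domain, its second Veronese $A^{(2)}$ is therefore a domain (either by the general fact quoted in Section~3 or by your subring observation), and Theorem~\ref{hibirings} exhibits $A^{(2)}$ as an ASL on $Z_2(P)$, which is precisely the definition of $Z_2(P)$ being integral. The paper treats this as an immediate consequence of Theorem~\ref{hibirings} without writing out these steps, so your write-up is simply a fuller version of the same reasoning.
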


\section{A Poset  Construction in Dimension Three}

Let $P$ be a   poset of rank at most three. Denote the   minimal elements  of $P$ by $\m_1, \m_2, \ldots, \m_r$ and  let $d \ge 2$ be a positive integer. We will construct a poset \P{d} that has the combinatorial properties (1c), (2c) and (3c) described in the previous section.

Let the elements of  \P{d} be the $d$-multichains in $P$. Let $\a =  (\a_1,\ldots.\a_d)$ and $\b = (\b_1,\ldots.\b_d)$ be two such multichains. Recall that this means  $\a_1 \le \ldots \le \a_d$ and $\b_1 \le \ldots \le \b_d.$ 
For each multichain \a~ we def\mbox{}ine:\[ \vv(\a) = (\hght(\a_1), \hght(\a_2)-\hght(\a_1),\ldots, \hght(\a_d)-\hght(\a_{d-1})).\]
We say that $\a \le \b$ if the following two conditions hold:
\begin{itemize}
\item[1.]
The set $\{ a_1,\ldots.\a_d,\b_1,\ldots.\b_d\}$ is totally ordered. 
\item[2.] 
$\vv(\a) \le \vv(\b)$ component-wise.
\end{itemize}
First of all notice that the two conditions above imply that $\a_i \le \b_i$ for every $i=1,\ldots,d$. The converse does not hold, meaning that the relation above is not the component-wise order on the set of $d$-multichains in $P$. For instance, if $P =\{0,1,2\}$ with the natural order, the 2-multichain $(2,2)$ is component-wise larger than the 2-multichain $(0,1)$, but  $\vv((2,2)) = (2,0)$ and $ \vv((0,1)) = (0,1)$ are not comparable, so condition 2. does not hold.

In general, for a vector $v = (v_1,\ldots,v_n)$ denote by $|v|= \sum_{i=1}^{i=n}v_i$. In our case, the fact that $P$ has rank $3$ implies that for every $d$-multichain \a~, $|\vv(\a)| \le 2$. It is easy to see that if $\a < \b$, then $|\vv(\a)| < |\vv(\b)|$. 
Also notice that the only $d$-multichains $\a_{i}$ with $|\vv(\a_{i})| = 0$ are $\a_{i} = (\m_i,\ldots,\m_i)$, for some minimal element of $P$.  This fact will guarantee that if $P$ has a unique minimal element, then $\P{d}$ has a unique minimal element as well. But f\mbox{}irst we need to check that we have def\mbox{}ined a partial order. 
\begin{lemma}\label{po}
If $P$ is a poset with $\rank(P)\le3$,  the above relation   is a partial order on $\P{d}$. 
 \end{lemma}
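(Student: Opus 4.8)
The plan is to verify the three axioms of a partial order in turn, recording first two elementary facts that I will use repeatedly. Reflexivity is immediate: a $d$-multichain $\a_1\le\cdots\le\a_d$ is itself a chain, so $\{\a_1,\ldots,\a_d\}$ is totally ordered, and $\vv(\a)\le\vv(\a)$ trivially. The two facts are: (i) on any chain the height function is a strictly increasing bijection onto an interval of integers, so two elements lying in one common chain are equal as soon as they have the same height, and the one of smaller height is the smaller; and (ii) as already observed after the definition, $\a\le\b$ forces $\a_i\le\b_i$ for every $i$. Fact (ii) holds because summing the first $i$ components of $\vv(\a)\le\vv(\b)$ gives $\hght(\a_i)\le\hght(\b_i)$, while $\a_i$ and $\b_i$ are comparable by condition~1, so fact (i) yields $\a_i\le\b_i$.

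Antisymmetry is then quick. If $\a\le\b$ and $\b\le\a$, fact (ii) gives $\a_i\le\b_i$ and $\b_i\le\a_i$ for every $i$, whence $\a_i=\b_i$ by antisymmetry in $P$, so $\a=\b$.

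For transitivity, suppose $\a\le\b\le\c$. The component-wise condition is inherited for free, since $\vv(\a)\le\vv(\b)\le\vv(\c)$. All the work lies in condition~1, i.e.\ in showing that $S=\{\a_1,\ldots,\a_d,\c_1,\ldots,\c_d\}$ is totally ordered; as the $\a_i$ form a chain and the $\c_k$ form a chain, it suffices to prove that $\a_i$ and $\c_k$ are comparable for all $i,k$. If $i\le k$ this is direct, since $\a_i\le\a_k\le\b_k\le\c_k$ by the multichain property and fact (ii). If $i>k$, I first exploit the comparabilities available inside the totally ordered sets $C_1=\{\a_1,\ldots,\a_d,\b_1,\ldots,\b_d\}$ and $C_2=\{\b_1,\ldots,\b_d,\c_1,\ldots,\c_d\}$: since $\a_i$ and $\b_k$ are comparable in $C_1$, if $\a_i\le\b_k$ then $\a_i\le\c_k$ and we are done; since $\c_k$ and $\b_i$ are comparable in $C_2$, if $\b_i\le\c_k$ then $\a_i\le\b_i\le\c_k$ and we are done. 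This leaves the single genuinely problematic configuration $\b_k<\a_i\le\b_i$ and $\b_k\le\c_k<\b_i$, in which $\a_i$ and $\c_k$ are squeezed between the common elements $\b_k$ and $\b_i$, with $\hght(\b_k)<\hght(\b_i)$.

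This remaining configuration is exactly where the hypothesis $\rank(P)\le 3$ is essential, and I expect it to be the main obstacle. Because all heights lie in $\{0,1,2\}$, the gap $\hght(\b_i)-\hght(\b_k)$ is either $1$ or $2$. If it is $1$, then $\hght(\a_i)=\hght(\b_i)$, so $\a_i=\b_i$ by fact (i) and $\c_k<\b_i=\a_i$. If it is $2$, then $\hght(\b_k)=0$ and $\hght(\b_i)=2$, so $\hght(\b_d)=2=|\vv(\b)|$; since $|\vv(\c)|\le 2$ and $\vv(\b)\le\vv(\c)$ component-wise, this forces $\vv(\c)=\vv(\b)$, hence $\hght(\c_k)=\hght(\b_k)=0$ and $\c_k=\b_k$ by fact (i), giving $\c_k=\b_k<\a_i$. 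In either case $\a_i$ and $\c_k$ are comparable, so $S$ is a chain and transitivity follows. The delicate point is precisely this rigidity: a height jump of $2$ inside $\b$ saturates $|\vv(\b)|$ and thereby freezes $\vv(\c)$ equal to $\vv(\b)$, which is exactly what prevents two incomparable height-$1$ elements from slipping in between $\b_k$ and $\b_i$ and breaking the chain.
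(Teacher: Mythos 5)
Your proof is correct, but it takes a genuinely different route from the paper's. The paper reduces transitivity to the case of strict inequalities $\a < \b < \c$ and then exploits the observation made just before the lemma that $\a < \b$ forces $|\vv(\a)| < |\vv(\b)|$: since all three values lie in $\{0,1,2\}$, pigeonhole gives $|\vv(\a)| = 0$, so $\a = (\m_i,\ldots,\m_i)$ is the constant multichain on a minimal element, and then $\a \le \c$ is immediate because $\m_i \le \b_j \le \c_j$ for every $j$. You instead verify comparability of $\a_i$ and $\c_k$ pair by pair, and the rank hypothesis enters only in the single squeezed configuration $\b_k < \a_i \le \b_i$, $\b_k \le \c_k < \b_i$, which you resolve by a height-gap analysis: gap $1$ forces $\a_i = \b_i$, while gap $2$ saturates $|\vv(\b)| = 2$ and freezes $\vv(\c) = \vv(\b)$, hence $\c_k = \b_k$. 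The paper's argument is shorter and hinges on the global invariant $|\vv|$; yours is more local and makes visible exactly which pairwise comparabilities are at risk and how $\rank(P)\le 3$ rescues them, which also explains concretely why, as the paper remarks after the lemma, transitivity may fail when $\rank(P) > 3$. One cosmetic point: in your fact (i), the height function on a chain need not be a bijection onto an interval of integers, since heights can skip values when longer chains pass through other parts of the poset; but you only ever use strict monotonicity of height along chains, which is correct, so nothing in your argument is affected.
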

\begin{proof}
 Ref\mbox{}lexivity is obvious. As two elements of the same height in $P$ are either not comparable or equal, antisymmetry follows as well. 
 To check transitivity it is enough to suppose that all inequalities are strict. Let $\a,\b,\c$ be $d$-multichains such that $\a < \b$ and $\b < \c$. Then we also have $|\vv(\a)| < |\vv(\b)|$ and $|\vv(\b)| < |\vv(\c)|$. As $|\vv(\a)|,|\vv(\b)|, |\vv(\c)| \in \{0,1,2\}$ this implies $|\vv(\a)| = 0$, so  we get $\a = \a_{i} =(\m_i,\ldots,\m_i)$ for some minimal element $\m_i$. By the f\mbox{}irst condition we obtain that $\m_i$ is also the minimal element of the totally ordered set $\{\m_i,\b_1,\ldots,\b_d\}$, where $\b=(\b_1,\ldots,\b_d)$. As  we also have $\b\le\c$ component-wise, we obtain that the set $\{\m_i,\c_1,\ldots,\c_d\}$ is totally ordered. Clearly $\vv(\a) \le \vv(\c)$ by the transitivity of the component-wise ordering, so we obtain the transitivity of the relation we def\mbox{}ined.
 \end{proof}
\noindent The above proof  obviously depends on the fact that the rank of $P$ is three. But this is actually a necessary condition for Lemma \ref{po}, in the sense that for $\rank(P) > 3$ transitivity may fail.\\

From now on we will consider on the set $\P{d}$ only the partial order of Lemma \ref{po}. In general, for a positive integer $m$ and a poset $P$, we denote: 
 \[M_m(P)=\{ m\textup{-multichains in }P\}.\] 
 For an $m$-multichain \a~in $P$ denote by $\supp(\a)$ the set of vertices that appear in \a. If $\a'$~is a multichain in \P{d} then by $\supp_P(\a')$ we denote the set of vertices of $P$ that appear in any of  the $d$-multichains that $\a'$~is made of.  For example, if $P =\{0,1,2\}$ with the natural order, then
 \begin{eqnarray*}
 M_2(P)& = &\{(0,0),(0,1),(1,1),(0,2),(1,2),(2,2)\},\\
 \supp((1,1,2,2))& = & \{1,2\},\\
 \supp_P(((0,0),(0,1),(0,2)))&=&\{0,1,2\}.
 \end{eqnarray*}
 Before we prove that \P{d} has the desired combinatorial properties, we will prove the following remark.

\begin{remark}\label{bij}
1.  If $P_0 = \{0,1,2\}$ with the natural order, then $P_0^{(d)} \cong H_3(d)$.\\
2. There exists a bijection, say
 $ f_{P_{0},d,m} :  M_{md}(P_0) \longrightarrow  M_m(P_0^{(d)}), $
 such that for any $\a \in M_{md}(P_0)$ we have $\supp(\a) = \supp_{P_0}(f_{P_0,d,m}(\a))$.
\end{remark}
 \begin{proof}
In the f\mbox{}irst part,  the isomorphism of posets  is given by:
 \[ \a = (\a_1,\ldots,\a_d) \longmapsto \vv(\a)=(\a_1,\a_2-\a_1,\ldots,\a_d-\a_{d-1}), \] 
its inverse being:
 \[ H_3(d) \ni v= (v_1,\ldots,v_d) \longmapsto (v_1, v_1 +v_2,\ldots, \sum_{i=1}^{i=d}v_i).\]
For the second part, we already know by the ASL structure on $H_3(d)$ of the polynomial ring in three variables that
 $|M_{md}(P_0)|=|M_m(P_0^{(d)})|$ for every $m$.  It is easy to check that  for every subposet of $Q \subset P_0$ we have that $Q^{(d)}$ is a subposet of $P_0^{(d)}$ with the canonical embedding.   So we can construct $f_{P_0,d,m}$ step by step, starting  with $|\supp(\a)| =1$, which correspond to subposets of rank one.
\end{proof}

We will show now an example of how the bijection $f_{P_0,d,m}$ above can be constructed. We will also see that if $P_0$ is the chain of length 4, the fact that for every subposet $Q \subset P_0$ also $Q^{(d)}$ is a subposet of $P_0^{(d)}$ canonically, no longer holds. 

\begin{example} As in Remark \ref{bij}, let $P_0=\{0,1,2\}$ with the natural order. We will construct
\[f_{P_0,2,2} : M_{4}(P_0) \longrightarrow  M_2(P_0^{(2)}).\]
We start with the 4-multichains  supported on one element, that is: $(0,0,0,0), (1,1,1,1)$ and $(2,2,2,2)$. Notice that $\{i\}$ is a subposet of $P_0$ and $\{i\}^{(2)} = \{(i,i)\} \subset P_0^{(2)}$. So in this case there is no other choice than:
\[f_{P_0,2,2,}((i,i,i,i))= ((i,i),(i,i)),\quad \forall~ i=0,1,2.\] 
We will now consider the 4-multichains supported on two elements, and divide them in groups corresponding to the rank two subposet of $P_0$ that contains them. In particular, the 4-multichains  contained in $Q=\{0,1\}$ and with support $\{0,1\}$ are $(0,0,0,1),(0,0,1,1)$ and $(0,1,1,1)$. We have $Q^{(2)} = \{(0,0),(0,1),(1,1)\}$ which is a subposet of $P_0^{(2)}$. The 2-multichains of $Q^{(2)}$ supported on $\{0,1\}$ are $((0,0),(0,1)),((0,0),(1,1))$ and $((0,1),(0,1))$. As one can see, there is no canonical  way in which to def\mbox{}ine the function, but any bijection between the two sets satisf\mbox{}ies the required conditions. We proceed in the same way with the 4-chains supported on $\{0,2\}$ and $\{1,2\}$. As we know a priori that $|M_{4}(P_0)|=|M_2(P_0^{(2)})|$ we  obtain also
\[  |\{\a \in M_4(P_0)~:~\supp(\a)=\{0,1,2\}\}| = |\{\a'\in M_2(P_0^{(2)})~:~\supp_{P_0}(\a')=\{0,1,2\}\}|,\]
so again any bijection between the two sets works.

Notice that if $P=\{0,1,2,3\}$ with the natural order, the subposet $Q=\{0,2,3\}$ has the property that the subposet of $P^{(2)}$ induced by the 2-multichains of $Q$ is not isomorphic to $Q^{(2)}$. For example $(0,2)\not\sim(2,3)$ in $P^{(2)}$, while $(0,2)<(2,3)$ in $Q^{(2)}$.
\end{example}

\begin{proposition}
Let $P$ be a  poset of rank at most three. Then for any $d\ge1$, the poset $\P{d}$ constructed above satisf\mbox{}ies:
\begin{itemize}
\item[\textup{(1c)}] If $P$ has a unique minimal element, \P{d} has a unique minimal element.
\item[\textup{(2c)}] \rank($P$) = \rank(\P{d}).
\item[\textup{(3c)}]  $|M_{md}(P)|$  = $|M_m(\P{d})|$ for all $m \ge 1$.
\end{itemize}
\end{proposition}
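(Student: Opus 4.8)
The plan is to treat the three properties separately, with (3c) as the heart of the matter; (1c) and (2c) are short. Throughout I may assume $d \ge 2$, the case $d=1$ being trivial since then $\P{1}=P$ and all three statements reduce to identities. For (1c), suppose $P$ has a unique minimal element $\m$, so that $\m \le x$ for every $x \in P$. Given any $d$-multichain $\a=(\a_1,\ldots,\a_d)$, the set $\{\m,\a_1,\ldots,\a_d\}$ is a chain, and $\vv((\m,\ldots,\m))=(0,\ldots,0)\le\vv(\a)$ componentwise because heights weakly increase along a chain, so every entry of $\vv(\a)$ is non-negative; hence $(\m,\ldots,\m)\le\a$ for all $\a$, making $(\m,\ldots,\m)$ the global minimum of $\P{d}$, in particular its unique minimal element. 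For (2c), set $r=\rank(P)$. Along any chain of $\P{d}$ the quantity $|\vv(\cdot)|$ strictly increases, and $|\vv(\a)|=\hght(\a_d)\le r-1$ by telescoping; hence chains have length at most $r$ and $\rank(\P{d})\le r$. For the reverse inequality I would take a chain $c_0<c_1<\cdots<c_{r-1}$ of $P$ realizing the rank (which forces $\hght(c_i)=i$) and let $\b^{(j)}$ be the $d$-multichain $(c_0,\ldots,c_0,c_1,\ldots,c_j)$, padded with copies of $c_0$ to length $d$; then $\vv(\b^{(j)})$ ends in exactly $j$ ones, so $\b^{(0)}<\cdots<\b^{(r-1)}$ is a chain of length $r$.

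For (3c), the first step is to show that every $m$-multichain $\a^{(1)}\le\cdots\le\a^{(m)}$ of $\P{d}$ has totally ordered $P$-support. Since the relation is a partial order (Lemma \ref{po}), transitivity gives $\a^{(i)}\le\a^{(j)}$ for all $i\le j$, and condition~1 in the definition of the order then forces $\supp(\a^{(i)})\cup\supp(\a^{(j)})$ to be totally ordered; as this holds for every pair, all elements that occur are pairwise comparable, so $\supp_P(\a^{(1)},\ldots,\a^{(m)})$ is a chain $C\subseteq P$. Consequently both $M_{md}(P)$ and $M_m(\P{d})$ split as disjoint unions over chains $C$ of $P$, indexed by support, and it suffices to match, for each $C$, the number of $md$-multichains of $P$ with support $C$ against the number of $m$-multichains of $\P{d}$ with $P$-support $C$.

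The plan is then to reduce each support block to the model chain $P_0=\{0,1,2\}$ of Remark \ref{bij}. Fix a chain $C\subseteq P$ with $|C|=s\le3$. The $d$-multichains supported in $C$, with the induced (componentwise $\vv$-)order, form a subposet $P^{(d)}_C$ whose isomorphism type depends only on $s$: for $s=3$ the three $P$-heights must be $0,1,2$, so $P^{(d)}_C$ is literally $P_0^{(d)}=H_3(d)$; for $s=2$ the comparisons among these multichains depend only on the position of the single height jump and not on its magnitude, so $P^{(d)}_C$ is the same poset obtained from a $2$-element subchain of $P_0$; and $s\le1$ is trivial. The order-isomorphism $C\to\{0,\ldots,s-1\}\subseteq P_0$ is support-preserving, so it identifies the $m$-multichains of $\P{d}$ with $P$-support exactly $C$ with the $m$-multichains of $P_0^{(d)}$ supported exactly on the corresponding $s$-chain. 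Feeding these through the support-preserving bijection $f_{P_0,d,m}$ of Remark \ref{bij} turns them into $md$-multichains of $P_0$ with that support, whose number equals the number of full-support $md$-multichains on an $s$-element chain, which is exactly the number of $md$-multichains of $P$ with support $C$. Summing over all $C$ yields $|M_{md}(P)|=|M_m(\P{d})|$.

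The main obstacle is precisely the phenomenon flagged by the warning example after Remark \ref{bij}: the order on $\P{d}$ is defined through heights computed in the ambient poset $P$, and once $\rank(P)>3$ a three-element support chain need not carry heights $0,1,2$, so $P^{(d)}_C$ can fail to be isomorphic to $P_0^{(d)}$ and the block-matching breaks down. The hypothesis $\rank(P)\le3$ is what makes the argument go through, forcing the heights on any $3$-chain to be $0,1,2$ and confining all values $|\vv(\cdot)|$ to $\{0,1,2\}$; the most delicate routine checks will be that the $s=2$ block really is independent of the height values and that the per-block relabelings assemble into one global bijection.
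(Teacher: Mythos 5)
Your proof is correct, and its core strategy coincides with the paper's: reduce (3c) to the model chain $P_0=\{0,1,2\}$ of Remark \ref{bij} and exploit the fact that both multichain counts decompose along chains of $P$. The difference is organizational. The paper constructs one global bijection $F\colon M_{md}(P)\to M_m(\P{d})$ by fixing, for every \emph{maximal} chain $C$, a bijection $f_{C,d,m}$ and gluing, which requires a coherence condition on overlaps $C\cap C'$ (asserted to be easy); you instead partition both sides by the \emph{exact} support chain and match cardinalities block by block, which removes the gluing/well-definedness step at the price of never exhibiting an explicit bijection --- harmless, since (3c) only asks for equal cardinalities. Your write-up also makes explicit two points the paper leaves implicit: first, that every $m$-multichain of \P{d} has totally ordered $P$-support (the paper needs this too, otherwise $F^{-1}$ is not defined on all of $M_m(\P{d})$); second, that the order induced by \P{d} on the $d$-multichains supported in a chain $C\subseteq P$ depends only on $|C|$, because $\rank(P)\le 3$ forces the heights along a $3$-chain to be $0,1,2$, while for a $2$-chain the comparisons of $\vv$-vectors are independent of the height values. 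This second point is precisely what legitimizes the paper's claim that \P{d} ``can be constructed chain-wise'', and, as you note, it is exactly what fails in higher rank (the example $Q=\{0,2,3\}\subset\{0,1,2,3\}$). Finally, your arguments for (1c) (exhibiting $(\m,\ldots,\m)$ as a global minimum) and for (2c) (the bound via $|\vv(\cdot)|$ together with the explicit chain $\b^{(0)}<\cdots<\b^{(r-1)}$) supply proofs of statements the paper merely declares clear; both are sound, including the observation that a rank-realizing chain $c_0<\cdots<c_{r-1}$ must satisfy $\hght(c_i)=i$.
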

Before we come to the actual proof we have one f\mbox{}inal observation. Any poset $P$ can be seen as the union of its maximal chains. This union is not disjoint, but the construction of \P{d}~ can be done on each such maximal chain $C$ and then \P{d}~ will be the union of the $C^{(d)}$-s. In the following f\mbox{}igure we present an example of how the construction of $\P{d}$ can be done chain-wise. \\
\setlength{\unitlength}{1mm}
\begin{picture}(120,40)(-30,0)
\put(20,10){\circle*{1}}   \put(22,9){$0$}
\put(20,20){\circle*{1}}   \put(22,19){$1$} 
\put(10,30){\circle*{1}}   \put(12,29){$2$}
\put(30,30){\circle*{1}}   \put(32,29){$3$}
\put(18,2){$P$}
\put(35,19){=}

\put(20,10){\line(0,1){10}}
\put(20,20){\line(-1,1){10}}
\put(20,20){\line(1,1){10}}
\put(45,10){\line(0,1){20}}
\put(70,10){\line(0,1){20}}

\put(45,10){\circle*{1}}  \put(47,9){$0$}
\put(45,20){\circle*{1}}  \put(47,19){$1$}
\put(45,30){\circle*{1}}  \put(47,29){$2$}
\put(43,2){$C_1$}

\put(58,19){$\cup$}

\put(70,10){\circle*{1}}  \put(72,9){$0$}
\put(70,20){\circle*{1}}  \put(72,19){$1$}
\put(70,30){\circle*{1}}  \put(72,29){$3$}
\put(68,2){$C_2$}
\end{picture}\\

\setlength{\unitlength}{0.7mm}
\begin{picture}(120,40)(-24,0)

\put(30,10){\circle*{1.43}}   \put(33,9){$00$}
\put(20,20){\circle*{1.43}}   \put(24,18){$11$} 
\put(40,20){\circle*{1.43}}   \put(43,18){$10$}
\put(5,35){\circle*{1.43}}   \put(7,35){$22$}
\put(15,35){\circle*{1.43}}   \put(17,35){$21$}
\put(25,35){\circle*{1.43}}   \put(27,35){$20$}
\put(35,35){\circle*{1.43}}   \put(37,35){$33$}
\put(45,35){\circle*{1.43}}   \put(47,35){$31$}
\put(55,35){\circle*{1.43}}   \put(57,35){$30$}

\put(27,0){$P^{(2)}$}
\put(63,19){=}

\put(30,10){\line(-1,1){25}}
\put(30,10){\line(1,1){25}}
\put(20,20){\line(-1,3){5}}
\put(20,20){\line(1,1){15}}
\put(20,20){\line(5,3){25}}
\put(40,20){\line(1,3){5}}
\put(40,20){\line(-1,1){15}}
\put(40,20){\line(-5,3){25}}

\put(95,10){\circle*{1.43}}   \put(98,9){$00$}
\put(85,20){\circle*{1.43}}   \put(87,18){$11$} 
\put(105,20){\circle*{1.43}}   \put(107,18){$10$}
\put(70,35){\circle*{1.43}}   \put(72,35){$22$}
\put(80,35){\circle*{1.43}}   \put(82,35){$21$}
\put(90,35){\circle*{1.43}}   \put(92,35){$20$}

\put(95,10){\line(-1,1){25}}
\put(95,10){\line(1,1){10}}
\put(85,20){\line(-1,3){5}}
\put(105,20){\line(-1,1){15}}
\put(105,20){\line(-5,3){25}}

\put(92,0){$C_1^{(2)}$}

\put(120,19){$\cup$}

\put(140,10){\circle*{1.43}}   \put(143,9){$00$}
\put(130,20){\circle*{1.43}}   \put(134,18){$11$} 
\put(150,20){\circle*{1.43}}   \put(153,18){$10$}
\put(145,35){\circle*{1.43}}   \put(147,35){$33$}
\put(155,35){\circle*{1.43}}   \put(157,35){$31$}
\put(165,35){\circle*{1.43}}   \put(167,35){$30$}

\put(137,0){$C_2^{(2)}$}

\put(140,10){\line(-1,1){10}}
\put(140,10){\line(1,1){25}}
\put(130,20){\line(1,1){15}}
\put(130,20){\line(5,3){25}}
\put(150,20){\line(1,3){5}}

\end{picture}\\

\begin{proof}
As we have already noticed, the minimal elements of $\P{d}$ are of the form $(\m_i,\ldots,\m_i)$ for all minimal elements $\m_i\in P$. This implies (1c). It is also clear that \rank($P$) = \rank(\P{d}). So we just need to def\mbox{}ine a bijection from $M_{md}(P)$  to $M_m(\P{d})$. To this aim we will use the observation that $\P{d}$ can be constructed chain-wise.

 We  f\mbox{}ix for each maximal chain $C$ in $P$ a bijection $f_{C,d,m}$ as in Remark \ref{bij}. It is easy to see this can be done in a coherent way, in the sense that if $\a \in C\cap C'$, then $f_{C,d,m}(\a) = f_{C',d,m}(\a).$
Let $\a \in M_{md}(P)$. We def\mbox{}ine $F :  M_{md}(P) \longrightarrow M_m(\P{d})$ as follows 
\[ F(\a) = f_{C,d,m}(\a) \in C^{(d)} \subset \P{d},  \]
 where  $C$ is a maximal chain such that $\supp(\a) \subseteq C$.
From the way we chose $f_{C,d,m}$,  we can deduce that $F(\a)$ is well def\mbox{}ined.
The function $F$ is bijective because it has an inverse $F^{-1} :  M_{m}(\P{d}) \longrightarrow M_{md}(P)$ given by
 \[ F^{-1}(\b) = f_{C,d,m}^{-1}(\b) \in C \subset P, \]
 where $\b\in  M_{m}(\P{d})$ and $C \subset P$ is a maximal chain such that $\b \in C^{(d)}$. The same arguments as above tell us that also $F^{-1}$ is well def\mbox{}ined. 
\end{proof}

\end{document}